\DeclareSymbolFontAlphabet{\amsmathbb}{AMSb}%
\tikzset{
    >=stealth',
    punkt/.style={
           rectangle,
           rounded corners,
           draw=black, thick,
           text width=5.5em,
           minimum height=2em,
           text centered},
    punktl/.style={
           rectangle,
           rounded corners,
           draw=black, thick,
           text width=7em,
           minimum height=2em,
           text centered},
    pil/.style={
           ->,
           shorten <=4pt,
       shorten >=4pt
    },
    pildotted/.style={
           ->,
           shorten <=4pt,
           shorten >=4pt,
  dotted,
  },
    punktf/.style={
           rectangle,
           text width=4.0em,
           minimum height=1.5em,
           text centered},
    punktfTop/.style={
           rectangle,
           text width=4.0em,
           minimum height=1.5em,
           text centered,
           append after command={
               [thick,shorten >=0.2bp, shorten <=0.2bp]
               (\tikzlastnode.north west)edge(\tikzlastnode.north east)
}
    },
    punktfBot/.style={
           rectangle,
           text width=4.0em,
           minimum height=1.5em,
           text centered,
           append after command={
               [thick,shorten >=0.2bp, shorten <=0.2bp]
               (\tikzlastnode.south west)edge(\tikzlastnode.south east)
            }
    }
}
\pgfplotsset{compat=1.17}
\numberwithin{table}{section}
\numberwithin{figure}{section}
\numberwithin{equation}{section}
\definecolor{darkblue}{rgb}{.2, 0.2,.8}
\definecolor{darkgreen}{rgb}{0,0.5,0.3}
\definecolor{darkred}{rgb}{.8, .1,.1}
\newcommand{\dd}{\mathrm{d}}
\newcommand{\1}{{\mathds{1}}}
\newcommand{\N}{\amsmathbb{N}}
\newcommand{\R}{\amsmathbb{R}}
\newcommand{\E}{\amsmathbb{E}}
\renewcommand{\P }{{\amsmathbb P}}
\newtheorem{lemma}{Lemma}[section]
\newtheorem{theorem}[lemma]{Theorem}
\newtheorem{corollary}[lemma]{Corollary}
\newtheorem{remark}{Remark}[section]
\newcommand{\ap}[1]{\mathbb{#1}}
\newcommand{\tld}[1]{\widetilde{#1}}
\newcommand{\apt}[1]{\tld{\mathbb{#1}}}
\begin{document}
\title[Approximations of semi-Markov processes and insurance policy valuation]{Approximations of semi-Markov processes and insurance policy valuation}

\author[M. Bladt]{Martin Bladt}
\thanks{Martin Bladt. University of Copenhagen, Department of Mathematical Sciences, Universitetsparken 5,
2100 K{\o}benhavn, Denmark, martinbladt@math.ku.dk}

\author[A. Minca]{Andreea Minca}
\thanks{Andreea Minca. Cornell University, School of Operations Research and Information Engineering, Ithaca,
NY, 14850, USA, acm299@cornell.edu}

\author[O. Peralta]{Oscar Peralta}
\thanks{Oscar Peralta. Cornell University, School of Operations Research and Information Engineering, Ithaca, NY, 14850,
op65@cornell.edu}

\maketitle

\begin{abstract}

Inspired by a duration-dependent life insurance model, we consider continuous-time semi-Markov jump processes, { initially assumed to have a finite state-space}. We develop approximations using jump processes that are time-homogeneous Markov, conditioned on a high-intensity Poissonian grid (grid-conditional). Our results are based on a recent adaptation of the uniformization principle, which yields a strongly pathwise convergent sequence of jump processes. In contrast to traditional methods that use classical approximations to integro-differential equation solutions to compute value functions, our approximations result in easily implementable expressions, making them valuable in situations where evaluating pathwise distributional functionals for the original semi-Markov process is challenging. Our homogeneous approximation, initially grid-conditional, evolves into an unconditional version that remains effective under reasonable regularity assumptions. { We then relax the finite state-space assumption and show how our results can be extended to a general measurable state-space}. We illustrate the practicality of our approach with a disability insurance model, using realistic underlying semi-Markov process parameters.
\end{abstract}
{\bf Keywords:} Disability insurance;   pathwise approximation;  policy valuation; semi-Markov models;.

{\bf Mathematics Subject Classification:} 60K15, 91G05, 91G60.

\section{Introduction}

The traditional mathematical treatment of life insurance is centered around mortality tables and discrete-time computations, with virtually no connection to probability theory or continuous-time models. Around 1875, the Danish astronomer and mathematician Thorvald Thiele introduced the differential equation that governs the net premium reserve of a basic life insurance model, marking the first instance that a continuous-time life insurance framework was formally considered. Many decades later, \cite{sverdrup1952basic,hickman1964statistical} suggested formal probabilistic approaches for some special cases, while the full continuous-time finite-state Markov jump-process framework was first systematically considered in \cite{hoem1969markov}.

Since the formulation of the basic Markov model, and compounded by the consequential fact that a substantial number of industries in northern European countries have by now adopted continuous-time models, there has been a continuing theoretical and practical demand for extending such models to more flexible stochastic frameworks. Some notable cases have been stochastic mortality rate models (cf. \cite{cairns2008modelling}), the integration of financial mathematics and life-insurance concepts (cf. \cite{steffensen2006surplus}), and the introduction of a semi-Markov framework (cf. \cite{hoem1972inhomogeneous}, see also a modern treatment in \cite{buchardt2015cash}). The literature in these directions is vast and continues to expand. 
{ 
}
In this paper, we focus on the semi-Markov model, a duration-dependent stochastic process, within the context of general life insurance models featuring duration-dependent payments. Semi-Markov processes, first studied in their time-homogeneous version, represent jump processes where interarrival times are not exponentially-distributed, unlike time-homogeneous Markov processes. These are also known as Markov renewal processes in the literature. The seminal works of \cite{levy1954processus}, \cite{smith1955regenerative}, and \cite{cinlar1969markov} laid out the foundational groundwork for these models. Further, in a life-insurance framework, a time-inhomogeneous version of these processes which considers the jump behavior dependent on both time and the elapsed time since the last jump was explored in \cite{hoem1972inhomogeneous,janssen1984finite}. { Alternatively, employing the theory of piecewise deterministic Markov processes (e.g. \cite[Chapter 2]{davis2018markov} and \cite[Chapter 7]{jacobsen2006point}), semi-Markov models can be interpreted as intensity-driven jump processes with dynamics embedded in a general state-space. This perspective allows for a more general representation of states in the semi-Markov model, extending beyond the discrete state-space assumed in \cite{hoem1972inhomogeneous, janssen1984finite}.}

The limitations of simple Markovian models in accurately capturing life insurance products are well documented in both practice and literature, as seen in \cite{janssen1966application,hoem1972inhomogeneous,buchardt2015cash}. While some special cases permit clearer mathematical treatments similar to the Markov case, the general scenario remains methodologically complex and numerically delicate. This complexity arises from the need to determine both the transition rates and probabilities of the underlying process, which, in non-trivial multi-state structures, often requires solving Thiele's or Kolmogorov's differential equations. { In the actuarial literature, traditional approaches to modeling prospective reserves in life insurance have utilized the so-called forward and backward differential methods. The forward method, based on Kolmogorov’s forward differential equations, calculates transition probabilities and expected cash flows to determine reserves at a specific point in time, whereas the backward method uses Thiele’s differential equations to compute reserves across all time points. We refer to \cite[Section V.3]{asmussen2020risk} and \cite[Section 1.1]{furrer2020multi} for more details on the advantages and disadvantages of each method. As an alternative, simulation-based methods, such as the efficient Monte Carlo scheme proposed in \cite{fox1986discrete}, can be employed, although they are generally applicable only to time-homogeneous semi-Markov processes. For brevity, we will refer to the time-inhomogeneous semi-Markov model simply as ``semi-Markov'' from here on.}

{ Here, we propose a novel and general-purpose approximation that avoids the need to set up differential equations, establishing itself as an alternative to both forward and backward methods, while still aligning with the objectives of the forward method. Specifically, under notably robust conditions, we systematically construct a tractable process whose paths converge to those of the semi-Markov model in question. Unlike traditional time-discretization schemes, our approach operates in a continuous-time setting and avoids the limitations associated with solving differential equations. This allows for probabilistic convergence guarantees and offers a more accurate representation of the stochastic behavior of the original process. For the simpler case of time-inhomogeneous Markov processes, a construction of such an approximation was presented in \cite{bladt_peralta}. We now explain the general framework for the semi-Markov case.}

Heuristically, the proposed approximation constructs the semi-Markov model using uniformization (cf. \cite{jensen1953markoff}; see also \cite{van1992uniformization} for the inhomogeneous case) on top of a high-intensity Poisson process. This approach can be viewed as a pathwise strong convergence version of the Uniform Acceleration method \cite{massey1998uniform}, generalized to semi-Markov models. The core idea is to replicate the exact same sequence of states using another Poisson process that is identically distributed and independent of the original one. Conditional on the arrival times of the original Poisson process—referred to as \emph{grid-conditional}—this approximation becomes a simple time-homogeneous Markov jump process with transition probabilities that have an algorithmically tractable closed expression. Furthermore, under additional continuity assumptions on the underlying jump intensities, we present an \emph{unconditional} time-homogeneous Markov jump process with transition probabilities that converge to those of the original semi-Markov model.

As a particular consequence of our convergence results, we obtain a valuation approximation with explicit components that is guaranteed to approximate the target value function to arbitrary precision. Another implicit consequence is the generalization of absorption-time approximations as given in Section 4 of \cite{ahmad2023estimating}.
{ Moreover, we provide an explicit solution to a conjecture recently posed in \cite{ahmad2023aggregate} regarding the denseness of aggregate Markov models in relation to semi-Markov models.}

{ In contrast to methods that rely on discretizing time and duration---resulting in a countably infinite system of ODEs, as seen in \cite{buchardt2015cash} and \cite{adekambi2017integral}---our approach approximates the semi-Markov process in a pathwise sense. This strategy avoids the numerical complexities of solving integro-differential equations or large systems of ODEs, while leading to a  transparent understanding of the approximating process. 
} Our construction can be regarded as a general result in applied probability which may be employed in any setting where approximating value functions (or other functionals) of a semi-Markov process is of interest. In particular, we believe that continuous-time life insurance mathematics is an application well-suited to demonstrate our results in an elegant and practically useful manner. { However, it is important to note that our findings are applicable only in the smooth case and do not address the non-smooth scenarios considered by \cite{helwich2008durational}}. Several extensions emerge from our study. First, in Section~\ref{sec:credit} we suggest how to apply our framework to reduced form credit risk modeling, where the states can represent ratings of a company and we seek to evaluate a defaultable bond. Secondly, in Section~\ref{sec:control}, we show that under mild conditions, the approximation framework also holds for controlled semi-Markov processes, broadening the scope and applicability of our methodology beyond life insurance product valuation. We also suggest how to use our approximation for value functions in optimal control problems.

The remainder of the paper is structured as follows. In Section \ref{sec:preliminaries} we introduce the life insurance setup, and the underlying semi-Markov process.
In Section \ref{sec:approxgen1} we introduce our general grid conditional approximation. In Section \ref{sec:unconditional}, we provide unconditional approximations, under entry-wise Lipschitzness of the intensity matrix.
Section \ref{sec:examples} applies the framework to life insurance policy evaluation and presents extensions to the valuation of credit risk sensitive securities. It illustrates a potential use-case to optimal control. Technical proofs may be found in Appendices \ref{sec:strongSMJP} and \ref{sec:convergence_densities}.

\section{Preliminaries}
\label{sec:preliminaries}
This section provides some of the relevant background in the continuous-time life insurance formulation, and on the definition and construction of semi-Markov processes. We return to the former once the main strong approximation is established, while the latter motivates the construction principle of the approximating sequence.

\subsection{Life insurance setup}\label{sec:setup}
This section introduces the standard setup for a multi-state life insurance model. { We refer readers to \cite{norberg1991reserves,milbrodt1997markov,milbrodt1999hattendorff,milbrodt2000hattendorff,buchardt2015life} for a comprehensive review of such models in actuarial science.} Here, we present the semi-Markov model originally introduced in \cite{hoem1972inhomogeneous} and also discussed with modern notation in \cite{buchardt2015cash}.

Consider an insurance policy issued at time $0$ with a terminating time $T\in (0,\infty)$, beyond which there is no more coverage. We deal with a c\`adl\`ag stochastic process, defined on a probability space $(\Omega, \mathcal{F}, \mathds{P})$, taking values in a finite set $\mathcal{J}=\{1,\dots,J\}$. This set corresponds to different states of the policy. The stochastic process denoting the state of the policy at any given time is given by $Z=\{Z(t)\}_{t\ge 0}$
and is commonly referred to as the multi-state model. We define 
\begin{equation}\label{eq:U_def1}U(t)=\inf\{r>0: Z(t-r)\neq Z(t)\},\end{equation}
as the elapsed time since the last jump. We will refer to $U=\{U(t)\}_{t\ge 0}$ as the duration process. In this paper, we are interested in multi-state models $Z$ such that the bivariate process $(Z,U)$ is Markovian, known as semi-Markov processes. We will develop a deeper understanding of semi-Markov models in Subsection \ref{sec:SMJP}; for now, its definition will suffice.

When considering the value of a policy at any time other than inception, an updated amount will depend on the information available at time $t\in[0,T]$, including the duration process. {We denote $\mathcal{F}_t$ the $\mathds{P}$-completion of $\sigma(Z_s;\,s\le t)$.} Furthermore, we define the $J$-dimensional counting process $N=(N^1,\dots, N^J)$ as 
\[ N^k(t)=\sum_{s\in[0,t]} \1\{Z(s)=k,Z(s-)\neq k\},\quad k\in\mathcal{J}, \]
which counts the number of jumps into state $k$ experienced up to time $t$. {Here, we define $Z(0-)=Z(0)$ for convenience.}

Assume that $B^{k,u}(t)$ denotes, for each $k\in\mathcal{J}$ and $u\ge0$, a deterministic payment rate process for when $Z(t)$ is in state $k$. We decompose it into its absolutely continuous and discrete parts by
\[ B^{k,u}(t)=\int_0^t b^{k,u}(s)\dd s +\sum_{s\in[0,t]}\Delta B^{k,u}(s), \]
where $\Delta B^{k,u}(s)=B^{k,u}(s)-B^{k,u}(s-)$. Similarly, for payments occurring during state transitions, we define $b^{(j,u),(k,v)}(t)$ as the lump-sum payment occurring when transitioning from state $j$ with duration $u$ to state $k$ with duration $v$ at time $t$. Note that the only possible transitions occur for $v=0$.

Combining the above components, the total amount of benefits minus premiums is defined as the stochastic process
\[ \begin{split}
\dd B(t)&= \dd B^{Z(t),U(t)} + \sum_{(k,u)\neq (Z(t-),U(t-))}b^{(Z(t-),U(t-)),(k,u)}(t) \dd N^{k}(t)\\
&=\dd B^{Z(t),U(t)} + \sum_{k \neq Z(t-)}b^{(Z(t-),U(t-)),(k,0)}(t) \dd N^{k}(t).
\end{split} \]

Introducing the concept of return on investment for the portfolio and denoting an associated nonnegative deterministic interest rate $r(t)$ for $t\ge 0$, we may become interested in the discounted process 
\[ \int_t^T e^{-\int_t^s r}\dd B(s), \]
for the valuation of the insurance policy. Here, we use the shorthand notation $\int_t^s r=\int_t^s r(u)\dd u$. For instance, its first conditional moment given the information at time $t$ is called the {\textit{prospective reserve}}:
\[ V(t)=\E\left[\int_t^T e^{-\int_t^s r}\dd B(s) \mid \mathcal{F}_t\right], \]
while higher-order moments are usually of significant but secondary importance. The function $V$ represents the mean value of all future payments {up to time $T$}, given the information available at time $t$. Typically, most payment strategies are pre-set, and the condition {$V(0)=0$ (at inception, all total future payments should, on average, cancel out) is used to determine the last free coefficient to produce an \textit{actuarially fair} contract. In  case of an initial premium $b_0\le 0$, one instead has the more general expression $V(0^-) := V(0) + b_0 = 0$.}

By the semi-Markov property, we have
\[ V(t)=\E\left[\int_t^T e^{-\int_t^s r}\dd B(s) \mid Z(t),U(t)\right], \]
and so we now define the function of interest:
\[ V(t;i,u)=\E\left[\int_t^T e^{-\int_t^s r}\dd B(s) \mid Z(t)=i,U(t)=u\right],\quad i\in\mathcal{J},\: u\ge0. \]

{ Finally, we point out that the main focus of an actuary is often not directly $\ap{V}_{i}$, but rather the following quantity referred to as the \textit{expected cashflow}:
\begin{align}\label{cashflow_definition}
{ \mathcal{C}_{t;i,u}(\dd s) = \E\left[\dd B(s) \mid Z(t) = i, U(t) = u\right],\quad i\in\mathcal{J},\: u\ge0.}
\end{align}
This expected cashflow can be used to construct the value function $\ap{V}_{i}$. As the name indicates, the cashflow represents the  instantaneous (and un-discounted) rate of payment transfer from the insurer to the insured. Premium payments may be included in this transfer, accounted for as negative payments to the insured.}

\subsection{Semi-Markov jump processes}\label{sec:SMJP}

Let $Z=\{Z(t)\}_{t\ge 0}$ be a c\`adl\`ag stochastic process, defined on the probability space $(\Omega,\mathcal{F},\P)$, with state space $\mathcal{J}=\{1,2,\dots, J\}$. We say that $Z$ is a semi-Markov process if for all $0<t<s$ and $j\in\mathcal{J}$,
\begin{align*}
\P(Z(s)=j\,|\,Z(r), 0\le r\le t) =\P(Z(s)=j\,|\,Z(t), U(t)),
\end{align*} 
where { $U(t)$ corresponds to the duration process as defined in \eqref{eq:U_def1}.}

{ In this work, we focus on the class of time-inhomogeneous semi-Markov processes driven by a family of intensity matrices. We follow the description of semi-Markov processes from \cite{janssen1984finite}, which aligns well with the purposes of our study. However, as noted in the Introduction, we remark that alternative constructions can be made using intensity-driven Markov jump processes, such as those described in \cite[Chapter 2]{davis2018markov} and \cite[Chapter 7]{jacobsen2006point}. Here, the intensity matrices $\{\bm{\Lambda}(s,v)\}_{s,v\ge 0}$ (where $\bm{\Lambda}(s,v)=\{\Lambda_{ij}(s,v)\}_{i,j\in\mathcal{J}}$) possess the following characteristics:}
\begin{itemize}
       \item For all $v\ge 0$, the mapping $s\mapsto \bm{\Lambda}(s, v)$ is c\`adl\`ag,
       \item For all $s\ge 0$, the mapping $v\mapsto \bm{\Lambda}(s, v)$ is c\`adl\`ag,
       \item There exists $\gamma_0\le \infty$ such that $\sup_{s,v\ge 0, i\in\mathcal{J}} |\Lambda_{ii}(s,v)|\le \gamma_0$.
\end{itemize}
Under these conditions, we say that the semi-Markov process $Z$ is driven by $\{\bm{\Lambda}(s,v)\}_{s,v\ge 0}$ if, for all $i,j\in\mathcal{J},\, s,v\ge 0$,
\begin{align}
\mathds{P}\left(Z(s+h)=j\,|\, Z(s)=i, U(s)=v\right) = \delta_{ij} + \Lambda_{ij}(s,v)h + o(h),\label{eq:Z_semi1}
\end{align}
where $\delta_{ij}$ denotes the Kronecker delta function, and $o(h)$ represents a generic function $f:\R_+\rightarrow\R$ such that $\lim_{h\downarrow 0} f(h)/h = 0$.

We now briefly demonstrate how, for any family of intensity matrices $\{\bm{\Lambda}(s,v)\}_{s,v\ge 0}$ with the aforementioned characteristics, we can construct its associated semi-Markov process $Z$ on the interval $[t,\infty)$ (conditional on $Z(t)=i, U(t)=u$) using uniformization. 

First, consider the case $t=0$ and $U(t)=0$; the general case will follow from time-shifting arguments, which we detail at the end of this section. Here, assume that $(\Omega,\mathcal{F},\P)$ supports:
\begin{itemize}
       \item a Poisson process $\Gamma_0=\{\Gamma_0(s)\}_{s\ge 0}$ with intensity $\gamma_0$,
       \item a sequence $\{Y_\ell\}_{\ell\ge 1}$ of independent $\mbox{Unif}(0,1)$ random variables,
\end{itemize}
which are mutually independent.  Denote by $T_0=0, T_1, T_2,\dots$ the arrival times associated with $\Gamma_0$, and recursively construct the discrete-time bivariate process $\{(\zeta_\ell, \upsilon_\ell)\}_{\ell \ge 0}$ where $\zeta_0=i$, $\upsilon_0=0$, and for $\ell\ge 0$

\begin{align*}
\zeta_{\ell+1} = j\quad&\text{if}\quad Y_{\ell+1}\in \left[\sum_{k=0}^{j-1} P_\ell (k), \sum_{k=0}^{j} P_\ell(k)\right)\\&\text{with}\quad P_\ell(k)=\delta_{\zeta_\ell, k} + \frac{\Lambda_{\zeta_\ell, k}(T_{\ell+1}, \upsilon_{\ell} + (T_{\ell+1}-T_\ell))}{\gamma_0},\end{align*}
\begin{align*}
\upsilon_{\ell+1} = \left\{\begin{array}{ccc} \upsilon_{\ell} + (T_{\ell+1}-T_\ell) & \text{if} & \zeta_{\ell+1}=\zeta_{\ell},\\
0 & \text{if} & \zeta_{\ell+1}\neq\zeta_{\ell}. \end{array}\right.
\end{align*}
In the scheme mentioned above, given $\zeta_\ell, \upsilon_{\ell}, T_0, T_1, T_2,\dots$, the weights $P_{\ell}(k)$ represent the probability that $\zeta_{\ell+1}$ is equal to $k$. Simultaneously, we update the time elapsed since the last jump by time $T_{\ell+1}$, denoted by $v_{\ell +1}$. Next, we define $Z=\{Z(s)\}_{s\ge 0}$ as
\begin{equation}\label{eq:Zunif1}Z(s)= \zeta_{\Gamma_0(s)},\quad s\ge 0.\end{equation}
It can be shown that with $U(s)$ defined as in (\ref{eq:U_def1}),
\begin{equation}\label{eq:U2}U(s)=\upsilon_{\Gamma_0(s)} + (s-T_{\Gamma_0(s)}).\end{equation}
Using (\ref{eq:U2}) and considering the right-continuity of $\bm{\Lambda}(\cdot,\cdot)$, for a small $h>0$,
\begin{align*} 
\mathds{P}&\left(Z(s+h)=j\,|\, Z(s)=i, U(s)=v\right)\\
& = \mathds{P}\left( \Gamma_0(r)=\Gamma_0(s)\text{ for all }r\in(s,s+h] \,|\, Z(s)=i, U(s)=v\right)\\
&\qquad \times \mathds{P}\left(Z(s+h)=j \,|\, \Gamma_0(r)=\Gamma_0(s)\text{ for all }r\in(s,s+h], Z(s)=i, U(s)=v\right)\\
& + \mathds{P}\left( \Gamma_0(r)\neq \Gamma_0(s)\text{ for some }r\in(s,s+h]  \,|\,  Z(s)=i, U(s)=v\right) \\
&\qquad \times \mathds{P}\left(Z(s+h)=j\,|\, \Gamma_0(r)\neq \Gamma_0(s)\text{ for some }r\in(s,s+h],  Z(s)=i, U(s)=v\right) \\
& = (1-\gamma_0 h + o(h))\delta_{ij} + (\gamma_0h + o(h))\left(\delta_{ij} + \frac{\Lambda_{ij}(s, v)}{\gamma_0}\right) = \delta_{ij} + \Lambda_{ij}(s, v) h + o(h),
\end{align*}
which confirms the condition (\ref{eq:Z_semi1}), ensuring that $Z$ is a semi-Markov process driven by $\{\bm{\Lambda}(s,v)\}_{s,v\ge 0}$.

For a fixed $t\ge 0$, to construct the bivariate process $(Z,U)$ on $[t,\infty)$ conditional on $Z(t)=i$ and $U(t)=u$, first create a semi-Markov process $Z^*$ and its associated duration process $U^*$ with the following properties. $Z^*$ operates in the state space $\{0,1\}\times \mathcal{J}$, with $Z^*(0)=(0,i)$, and is driven by the family of intensity matrices $\{\bm{\Lambda}^*(s,v)\}_{s,v\ge 0}$, where 
\begin{align}\label{state_augmentation}
\bm{\Lambda}^*(s,v)  =\begin{pmatrix} \bm{\Delta}(t+s,u+v) & \bm{\Lambda}(t+s,u+v) - \bm{\Delta}(t+s,u+v)\\ \bm{0} & \bm{\Lambda}(t+s,v)\end{pmatrix},
\end{align}
\begin{align*}
\bm{\Delta}(t+s,u+v)=\begin{pmatrix}\Lambda_{11}(t+s, u+ v) &  & &\\ & \Lambda_{22}(t+s, u+ v)  & &\\ & &\ddots & \\ &&&\Lambda_{JJ}(t+s, u+ v)
\end{pmatrix}.\end{align*}
For all $s\ge t$, we set $Z(s)=\pi_2(Z^*(s-t))$, where $\pi_2: \{0,1\}\times \mathcal{J}\rightarrow \mathcal{J}$ indicates the second coordinate projection function, and define
\begin{align*}
U(s)= \left\{ \begin{array}{ccc} u+ U^*(s-t)& \text{if}& U^*(r) > 0 \text{ for all } 0<r\le s-t\\ U^*(s-t)& \text{if}& U^*(r) = 0 \text{ for some } 0<r\le s-t.\end{array}\right.
\end{align*}
For this process, for $s\ge t$ and $j\neq i$,
\begin{align*} 
\mathds{P}&\left(Z(s+h)=j\,|\, Z(s)=i, U(s)=v\right)\\
& = \Lambda_{ij}(s, v)h + o(h),
\end{align*}
validating that the process $Z$ is semi-Markov with the desired properties. 
Given these considerations, in subsequent sections, we will focus on constructions over $[0,\infty)$ with $U(0)=0$ only, understanding that other scenarios can be addressed using the shifting method described above.

\section{Grid-conditional approximations of semi-Markov processes}\label{sec:approxgen1}

This section provides the construction of the main strong approximation of semi-Markov processes, which is done conditionally. We explain in detail the construction, while we delegate the proof of strong convergence in the Skorokhod $J_1$-topology to Appendix \ref{sec:strongSMJP}. We also provide a scheme for efficient computation of the transition probabilities of the approximation, and illustrate how these drastically facilitate the evaluation of the value function in the life insurance setup.

\subsection{Construction of strongly convergent scheme}\label{sec:approx1}
We now construct a pathwise approximation of the process $Z$ defined via (\ref{eq:Zunif1}) using approximate uniformization.

Fix some $\gamma\ge \gamma_0$. Assume that the probability space $(\Omega, \mathcal{F}, \P)$ supports an additional auxiliary independent Poisson process $\Gamma'$ with intensity $\gamma-\gamma_0$. Let $\Gamma$ be the superposition of $\Gamma_0$ and $\Gamma'$, making $\Gamma$ a Poisson process with intensity $\gamma$. Let $\ap{\Gamma}$ be an identical and independent copy of $\Gamma$, also supported by $(\Omega, \mathcal{F}, \P)$. Define $\{\chi_\ell\}_{\ell\ge 0}$ as the arrival times of $\Gamma$ and the sequence $\{\eta_\ell\}_{\ell\ge 0}$ by
\[\eta_\ell = Z(\chi_\ell),\quad \ell\ge 0.\]
Note that the process $Z$ can be reconstructed from $\{\eta_\ell\}_{\ell\ge 0}$ and $\Gamma$ via the equality
\begin{equation}\label{eq:Zunif2} 
Z(s) = \eta_{\Gamma(s)},\quad s\ge 0.
\end{equation}
For our approximation $\ap{Z}$ to $Z$, the key idea is to replace $\Gamma$ with $\ap{\Gamma}$ in the r.h.s. of (\ref{eq:Zunif2}), that is, let 
\begin{equation}\label{eq:Zdag-unif2} 
\ap{Z}(s) = \eta_{\ap{\Gamma}(s)},\quad s\ge 0.
\end{equation}

In essence, the process $\ap{Z}$ visits exactly the same states as $Z$ does, but does so using a ``different clock''; see {Figure \ref{fig:diffclocks1}}
\begin{figure}[!htbp]\centering
\includegraphics[width=0.69\textwidth]{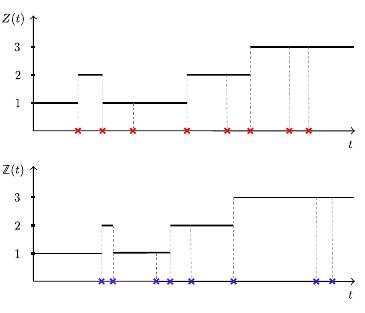}
\caption{Paths of the processes $Z$ and $\ap{Z}$ traversing the state space $\mathcal{J}=\{1,2,3\}$. Poissonian times of $\Gamma$ associated to $Z$ are shown with a red cross, while those of $\ap{\Gamma}$ associated to $\ap{Z}$ are shown with a blue cross. Note that $Z$ and $\ap{Z}$ sequentially visit the same states at the Poissonian epochs $\Gamma$ and $\ap{\Gamma}$.}\label{fig:diffclocks1}
\end{figure}
However, since $\Gamma$ and $\ap{\Gamma}$ have identical distributional characteristics, we can expect their paths to be \emph{close} to each other: we will investigate this statement in a precise way later in Theorem \ref{th:strong_simp}. Meanwhile, let us compute the distributional characteristics of the approximated process $\ap{Z}$ when conditioned on $\Gamma$. To do so, we first need to define an additional auxiliary process, $\ap{M}$, by letting
\[\beta_0=0, \quad \beta_{\ell+1} =\left\{\begin{array}{ccc} \beta_{\ell} + 1 &\mbox{if}& U(\chi_{\ell+1})\neq 0,\\ 0 &\mbox{if}& U(\chi_{\ell+1}) = 0, \end{array}\right.\]
\[\mbox{and} \quad \ap{M}(s)=\beta_{\ap{\Gamma}(s)}.\]
In short, $\ap{M}$ is a process that jumps at the same epochs as $\ap{\Gamma}$, either increasing by $1$ or restarting at $0$.
\begin{theorem}\label{th:apz-matrix-exp}
The process $(\ap{\Gamma},\ap{M},\ap{Z})$ with state space $\N_0\times \N_0\times \mathcal{J}$, when conditioned on $\Gamma$, is a time-homogeneous Markov jump process with random transition intensities given by

\begin{align}
 \Sigma(\ell_1, w_1, i_1; \ell_2, w_2, i_2)=
 \left\{\begin{array}{lll} \gamma\,Q_{i_1, i_2} (\ell_1, w_1) & \mbox{if}& \begin{array}{l}\ell_2 = \ell_1 + 1, w_2=0, i_2\neq i_1,\; \mbox{or}\\ \ell_2 = \ell_1 + 1, w_2=w_1+1, i_2= i_1, \end{array} \\
 0 && \mbox{other } (\ell_1, w_1, i_1)\neq(\ell_2, w_2, i_2),\\
  -\gamma && (\ell_1, w_1, i_1)=(\ell_2, w_2, i_2).\end{array}\right.
 \label{eq:intensities_aux_1}
 \end{align}

where 
\begin{equation}\label{eq:defQ1}\bm{Q}(\ell, w) = \bm{I} + \tfrac{1}{\gamma}\bm{\Lambda}(\chi_{\ell+1}, \chi_{\ell+1}-\chi_{\ell-w})\quad\mbox{for}\quad \ell \ge w.\end{equation}
\end{theorem}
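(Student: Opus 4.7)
The plan is to condition throughout on $\Gamma$ and exhibit the triple $(\ap{\Gamma},\ap{M},\ap{Z})$ as the random-time change of a $\Gamma$-measurable discrete-time Markov skeleton by an independent rate-$\gamma$ Poisson clock; the stated generator will then fall out of the standard construction of a Markov jump process from its skeleton.

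First I would re-express $Z$ via uniformization at the enlarged rate $\gamma\ge\gamma_0$. Thinning the superposition $\Gamma=\Gamma_0\cup\Gamma'$ and summing over whether the epoch $\chi_{\ell+1}$ is drawn from $\Gamma'$ (probability $(\gamma-\gamma_0)/\gamma$, yielding $\eta_{\ell+1}=\eta_\ell$ automatically) or from $\Gamma_0$ (probability $\gamma_0/\gamma$, followed by the uniform-driven transition of Subsection \ref{sec:SMJP}) should yield, for $\eta_\ell=i$ and duration $v=\chi_{\ell+1}-\chi_{\ell-\beta_\ell}$,
\[
\mathds{P}(\eta_{\ell+1}=j\mid \eta_\ell=i,\beta_\ell,\Gamma)=\delta_{ij}+\gamma^{-1}\Lambda_{ij}(\chi_{\ell+1},v)=Q_{i,j}(\ell,\beta_\ell).
\]
Combined with the deterministic update $\beta_{\ell+1}=(\beta_\ell+1)\,\mathds{1}\{\eta_{\ell+1}=\eta_\ell\}$ read off from the definition of $\beta_\ell$, this identifies $(\eta_\ell,\beta_\ell)_{\ell\ge 0}$, given $\Gamma$, as a Markov chain on $\mathcal{J}\times\N$ whose one-step transition from $(i_1,w_1)$ at index $\ell_1$ is supported on $\{(i_1,w_1+1)\}\cup\{(i_2,0):i_2\neq i_1\}$ with weights $Q_{i_1,i_2}(\ell_1,w_1)$.

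Next I would use that $\ap{\Gamma}$ is independent of $\Gamma$, of its coloring into $(\Gamma_0,\Gamma')$, and of the uniform driving sequence $\{Y_\ell\}$; conditioning on $\Gamma$ therefore preserves $\ap{\Gamma}$ as a rate-$\gamma$ Poisson process and keeps it independent of the skeleton $(\eta_\ell,\beta_\ell)_{\ell\ge 0}$. Since $\ap{\Gamma}$ increments by $1$ at each of its epochs, the relations $\ap{Z}(s)=\eta_{\ap{\Gamma}(s)}$ and $\ap{M}(s)=\beta_{\ap{\Gamma}(s)}$ realize $(\ap{\Gamma}(s),\ap{M}(s),\ap{Z}(s))_{s\ge 0}$ as the standard build-up of a Markov jump process from a discrete skeleton with independent $\operatorname{Exp}(\gamma)$ holding times. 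The off-diagonal intensities out of $(\ell_1,w_1,i_1)$ are then $\gamma$ times the one-step skeleton probabilities, which gives $\gamma Q_{i_1,i_2}(\ell_1,w_1)$ on the admissible targets and zero elsewhere; the stochasticity of $\bm{Q}$ pins the diagonal entry at $-\gamma$, matching (\ref{eq:intensities_aux_1}).

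The main point requiring care, I expect, is the label \emph{time-homogeneous}: the skeleton weights $Q_{i_1,i_2}(\ell_1,w_1)$ depend on the random times $\chi_{\ell_1+1}$ and $\chi_{\ell_1-w_1}$, but once $\Gamma$ is fixed these are functions of the enlarged-state coordinates $(\ell_1,w_1)$ alone, with no explicit reference to the calendar time $s$. The inhomogeneity-in-$\ell$ of the skeleton is thereby absorbed into the state through the $\ap{\Gamma}$-coordinate. A minor boundary check at $\ell_1=w_1$ follows from the standing conventions $\chi_0=0$ and $U(0)=0$, which render the duration $\chi_{\ell_1+1}-\chi_{\ell_1-w_1}$ consistent with the uniformization definition of $Z$.
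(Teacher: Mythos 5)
Your proposal is correct and follows essentially the same route as the paper: compute the $\Gamma$-conditional skeleton transition law of $(\beta_\ell,\eta_\ell)$ by thinning $\Gamma$ into $\Gamma_0$ and $\Gamma'$ (which yields exactly $\bm{Q}(\ell,w)$), then subordinate the augmented skeleton $(\ell,\beta_\ell,\eta_\ell)$ to the independent rate-$\gamma$ Poisson process $\ap{\Gamma}$ to read off the generator. The paper phrases the last step explicitly as uniformization and invokes the strong Markov property of $(t,U(t),Z(t))$ at $\chi_\ell$ to justify the Markov structure of the skeleton, but the content is the same; your added remark on why the result is time-homogeneous (the $\ell$-coordinate absorbs the inhomogeneity) matches the paper's "homogenization" step.
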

\begin{proof}
We start by computing the law of the process $\{X_{\ell}\}_{\ell\ge 0}$ conditional on $\Gamma$, where $X_{\ell}:=(\beta_\ell, \eta_\ell)$. First, 
\begin{align*}
\P&(X_{\ell+1}=(w_2,i_2)\mid \Gamma, X_0, X_1,\dots, X_{\ell-1}, X_{\ell}=(w_1,i_1))\\
& = \P(X_{\ell+1}=(w_2,i_2)\mid \Gamma, X_{\ell}=(w_1,i_1)),
\end{align*}
where we implicitly used the strong Markov property for the process $\{t,U(t),Z(t)\}_{t\ge 0}$ at time $\chi_\ell$. Consequently, when conditioned on $\Gamma$, $\{X_{\ell}\}_{\ell\ge 0}$ is a (time-inhomogeneous) Markov chain. Now, note that the values for $\Gamma$ and $X_{\ell}$ characterize those of $\chi_{\ell+1}$ and $U(\chi_{\ell+1}-)$, the latter being
\begin{equation}\label{eq:USaux1}
U(\chi_{\ell+1}-)=\chi_{\ell+1} - \chi_{\ell-\beta_\ell}.
\end{equation} 
Using shorthand notation $\{\chi_{\ell+1}\in\Gamma_0\}$ for the event where the arrival $\chi_{\ell+1}$ is associated with $\Gamma_0$ (as opposed to $\Gamma'$), then
\begin{align*}
\P&(X_{\ell+1}=(w_2,i_2)\mid X_{\ell}=(w_1,i_1), \chi_{\ell+1}=y, U(\chi_{\ell+1}-)= s)\\
&  = \P(X_{\ell+1}=(w_2,i_2)\mid X_{\ell}=(w_1,i_1), \chi_{\ell+1}=y, U(\chi_{\ell+1}-)= s, \chi_{\ell+1}\in\Gamma_0)\P(\chi_{\ell+1}\in\Gamma_0)\\
& \quad + \P(X_{\ell+1}=(w_2,i_2)\mid X_{\ell}=(w_1,i_1), \chi_{\ell+1}=y, U(\chi_{\ell+1}-)= s, \chi_{\ell+1}\notin\Gamma_0)\P(\chi_{\ell+1}\notin\Gamma_0)\\
&= \left(\delta_{w_1+1,w_2} \delta_{i_1,i_2} \left(1+ \frac{\Lambda_{i_1 i_2}(y,s)}{\gamma_0}\right) + \delta_{0,w_2}\delta_{i_1,i_2}^c \left(\frac{\Lambda_{i_1 i_2}(y,s)}{\gamma_0}\right)\right)\frac{\gamma_0}{\gamma} + \delta_{w_1+1,w_2}\delta_{i_1,i_2}\frac{\gamma-\gamma_0}{\gamma}\\
&= \delta_{w_1+1,w_2}\delta_{i_1,i_2} + (\delta_{w_1+1,w_2}\delta_{i_1,i_2} + \delta_{0,w_2}\delta_{i_1,i_2}^c)\left(\frac{\Lambda_{i_1 i_2}(y,s)}{\gamma}\right),
\end{align*}
where $\delta_{i_1,i_2}^c = 1 - \delta_{i_1,i_2}$. Thus, employing \eqref{eq:USaux1}, the transition probabilities of $\{X_\ell\}_{\ell\ge 0}$ conditional on $\Gamma$ are given by
\begin{align*}
\P&(X_{\ell+1}=(w_2,i_2)\mid \Gamma, X_{\ell}=(w_1,i_1))\\
&=\delta_{w_1+1,w_2}\delta_{i_1,i_2} + (\delta_{w_1+1,w_2}\delta_{i_1,i_2} + \delta_{0,w_2}\delta_{i_1,i_2}^c)\left(\frac{\Lambda_{i_1 i_2}(\chi_{\ell+1},\chi_{\ell+1}-\chi_{\ell-w_1})}{\gamma}\right).
\end{align*}

To ``homogenize'' the time-inhomogeneous Markov chain $\{X_\ell\}_{\ell\ge 0}$, we consider the augmented process $\{(\ell,\beta_\ell,\eta_\ell)\}_{\ell\ge 0}$, which is easily seen to follow the transition probabilities
\begin{align}
\P&((\ell+1,\beta_{\ell+1},\eta_{\ell+1})=(\ell_2, w_2,i_2)\mid (\ell,\beta_\ell,\eta_\ell)=(\ell_1,w_1,i_1))\nonumber\\
& = \delta_{\ell_1+1,\ell_2}\P(X_{\ell_1+1}=(w_2,i_2)\mid X_{\ell_1}=(w_1,i_1)).\label{eq:transition_aux_1}
\end{align}
Applying the uniformization method (see e.g. \cite{van2018uniformization}) to the Markov chain $\{(\ell,\beta_\ell,\eta_\ell)\}_{\ell\ge 0}$ subordinated at the independent Poisson process $\ap{\Gamma}$ with intensity $\gamma$, yields the Markov jump process $\{(\ap{\Gamma}(s),\beta_{\ap{\Gamma}(s)},\eta_{\ap{\Gamma}(s)})\}_{s\ge 0}= (\ap{\Gamma}, \ap{M}, \ap{Z})$. The jump intensities of this process correspond to the product of the transition probabilities \eqref{eq:transition_aux_1} with the intensity $\gamma$, which finally yields \eqref{eq:intensities_aux_1}.
\end{proof}

Below, we state an important result that formally establishes {$\ap{Z}$}, where $\ap{U}=\{\ap{U}(t)\}_{t\ge 0}$ is the duration process associated to $\ap{Z}$ via 
\begin{equation}\label{eq:Uapp6}\ap{U}(t)=\inf\{r>0: \ap{Z}(t-r)\neq \ap{Z}(t)\},\end{equation} as an appropriate approximation to the semi-Markov process $Z$. Its proof is included in the Appendix \ref{sec:strongSMJP}.

\begin{theorem}\label{th:strong_simp}
$(\ap{Z},\ap{U})$ converges strongly to $(Z,U)$ in Skorokhod's $J_1$-topology as $\gamma\rightarrow\infty$.
\end{theorem}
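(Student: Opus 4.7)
The plan is to exhibit a random time change $\lambda_\gamma:[0,\infty)\to[0,\infty)$ realigning the clock of $\ap{Z}$ with that of $Z$, and then show via the functional strong law of large numbers for Poisson processes that $\sup_{s\le T}|\lambda_\gamma(s)-s|\to 0$ almost surely as $\gamma\to\infty$. By construction, $Z$ and $\ap{Z}$ visit the \emph{same} sequence of states $\{\eta_\ell\}_{\ell\ge 0}$, using the epochs $\{\chi_\ell\}$ of $\Gamma$ and the epochs $\{\chi'_\ell\}$ of $\ap{\Gamma}$ respectively. The canonical candidate is therefore the piecewise linear interpolation with $\lambda_\gamma(\chi_\ell)=\chi'_\ell$ for every $\ell\ge 0$ (and $\chi_0=\chi'_0=0$). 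With this choice, $\ap{Z}(\lambda_\gamma(s))=Z(s)$ holds identically on any interval below the last knot, and since a real jump of $Z$ occurs at $\chi_\ell$ iff $\eta_\ell\neq\eta_{\ell-1}$ iff a real jump of $\ap{Z}$ occurs at $\chi'_\ell$, the last jump time of $\ap{Z}$ at or before $\lambda_\gamma(s)$ equals $\lambda_\gamma(\tau(s))$, where $\tau(s)$ is the last jump time of $Z$ at or before $s$. This yields the key bound
\[
|\ap{U}(\lambda_\gamma(s))-U(s)|=\bigl|(\lambda_\gamma(s)-s)-(\lambda_\gamma(\tau(s))-\tau(s))\bigr|\le 2\sup_{r\le T}|\lambda_\gamma(r)-r|,
\]
collapsing the convergence of both coordinates to a single estimate on $\lambda_\gamma$.

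Next, I would estimate $\sup_{s\le T}|\lambda_\gamma(s)-s|$. By the piecewise linear structure, $\sup_{s\le T}|\lambda_\gamma(s)-s|\le \max_{\ell:\chi_\ell\le T}|\chi'_\ell-\chi_\ell|$. Coupling the whole family $\{(\Gamma_\gamma,\ap{\Gamma}_\gamma)\}_{\gamma\ge\gamma_0}$ on a single probability space in a way consistent with the superposition $\Gamma_\gamma=\Gamma_0\oplus\Gamma'_\gamma$ of the fixed process $\Gamma_0$ with an independent Poisson process of rate $\gamma-\gamma_0$ (and likewise for the independent copy $\ap{\Gamma}_\gamma$), the functional strong law of large numbers gives $\sup_{s\le T'}|\Gamma_\gamma(s)/\gamma-s|\to 0$ and $\sup_{s\le T'}|\ap{\Gamma}_\gamma(s)/\gamma-s|\to 0$ almost surely for every $T'>0$. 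Inverting these uniform bounds at the level of arrival times, one obtains $\max_{\ell:\chi_\ell\le T}|\chi_\ell-\ell/\gamma|\to 0$ and the analogous statement for $\chi'_\ell$, and so by the triangle inequality $\max_{\ell:\chi_\ell\le T}|\chi'_\ell-\chi_\ell|\to 0$ almost surely.

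Putting the pieces together, on any compact $[0,T]$ we obtain almost surely that $\sup_{s\le T}|\lambda_\gamma(s)-s|\to 0$, that $\ap{Z}\circ\lambda_\gamma$ agrees with $Z$ on $[0,T]$ for all sufficiently large $\gamma$, and that $\sup_{s\le T}|\ap{U}(\lambda_\gamma(s))-U(s)|\to 0$; this is precisely convergence of $(\ap{Z},\ap{U})$ to $(Z,U)$ in Skorokhod's $J_1$-topology, uniformly on compacts. The main obstacle I anticipate is of a bookkeeping nature rather than a conceptual one: constructing a coupling of $\{(\Gamma_\gamma,\ap{\Gamma}_\gamma)\}_{\gamma\ge\gamma_0}$ that respects the structured decomposition $\Gamma_\gamma=\Gamma_0\oplus\Gamma'_\gamma$ on a common probability space, extending $\lambda_\gamma$ past the final knot inside $[0,T]$ without spoiling the uniform bound, and invoking a sufficiently quantitative form of the functional LLN (e.g.\ the law of the iterated logarithm) to pass cleanly from the almost sure convergence of $\Gamma_\gamma(s)/\gamma$ to the uniform bounds on the arrival-time differences $|\chi'_\ell-\chi_\ell|$.
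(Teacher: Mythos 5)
Your proposal follows essentially the same route as the paper: the piecewise linear time change sending the arrival times of $\Gamma$ to those of $\ap{\Gamma}$, the observation that $\ap{Z}\circ\lambda_\gamma=Z$ on the interpolated range, the duration bound $|\ap{U}(\lambda_\gamma(s))-U(s)|\le 2\sup_r|\lambda_\gamma(r)-r|$, and reduction to a uniform estimate on $\max_\ell|\chi'_\ell-\chi_\ell|$. The one point where the paper is more careful than your sketch is exactly what you flagged as bookkeeping: instead of a functional LLN or LIL (neither of which alone gives almost sure convergence as $\gamma\to\infty$ without specifying a nested coupling of the family $\{(\Gamma_\gamma,\ap{\Gamma}_\gamma)\}_\gamma$), the paper truncates the time change past the $\lfloor\gamma^{1+\varepsilon}\rfloor$-th knot and applies a dedicated concentration inequality for Poisson arrival times (\cite[Lemma 1]{bladt_peralta}) together with Borel--Cantelli, which sidesteps the coupling issue entirely.
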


As discussed in \cite[Lemma 1.2]{prokhorov1956convergence}, the almost sure $J_1$ convergence implies the convergence of the associated measures with respect to the L\'evy-Prokhorov distance. In particular, this guarantees that the probability measure of the stochastic process $(\ap{Z},\ap{U})$ converges to that of $(Z,U)$. This result is significant as it not only ensures the convergence of their transition probabilities in a pointwise manner but also implies that the expectation of any bounded continuous functional evaluated at $\ap{Z}$ will converge to that of $Z$.

\subsection{Efficient computation of the transition probabilities}

In view of Theorem \ref{th:strong_simp}, we now use the process $(\ap{Z},\ap{U})$ to develop an approximating scheme for the transition probabilities of the original process $(Z,U)$. For $0\le t \le s$, $0\le u \le s$, and $0\le v\le t$, define the grid-conditional transition probabilities $\ap{p}(s,\dd v)=\{\ap{p}_{ij}(s,\dd v)\}_{i,j\in \mathcal{J}}$ where
\[\ap{p}_{ij}(s,v) = \P(\ap{Z}(s)=j, \ap{U}(s)\le v \mid \Gamma, \, \ap{Z}(0)=i,\, \ap{U}(0)=0).\]
Due to the doubly-infinite-dimensional structure of the state-space of $(\ap{\Gamma},\ap{M},\ap{Z})$, computing $\ap{p}_{ij}(s,\dd v)$ via raw matrix-exponentiation using Theorem \ref{th:apz-matrix-exp} is, in general, intractable. Below, we present a recursive algorithm that avoids this. In the following, $\mathrm{Poi}_\lambda(k)$ for $k\in\N_0$ takes the form $e^{-\lambda} (\lambda)^k/k!$, and $\mathrm{Erl}_{k,\lambda}(x)$ for $x\ge 0$ takes the form $\lambda(\lambda x)^{k-1}e^{-\lambda x}/(k-1)!$.

\begin{theorem}\label{conditional_transitions_approx}
The measure $\ap{p}_{ij}(s, \dd v)$ can be explicitly computed via
\begin{align}\label{ltp_1}
\ap{p}_{ij}(s, \dd v) = \sum_{\ell, w\,:\, w\le \ell} \ap{p}_{ij}(\ell, w; s, \dd v).
\end{align}
The summands consist of an atomic part at $v=s$ described by 
\begin{align} \ap{p}_{ij}(\ell, w; s, \dd v)
& = \left\{\begin{array}{lll}
\mathrm{Poi}_{\gamma s}(\ell) \times \Pi_{ii}(\ell, w)&\mbox{if}& i=j, \ell=w\ge 0\\
0&&\mbox{otherwise},\end{array}\right.\label{eq:pijatom1}
\end{align}

and an absolutely continuous part described by 
\begin{align} \ap{p}_{ij}(\ell, w; s, \dd v)
& = \left\{\begin{array}{lll}
\mathrm{Erl}_{\ell-w,\gamma}(s-v) \times \mathrm{Poi}_{\gamma v} (w)\times \Pi_{ij}(\ell, w) \dd v &\mbox{if}& \ell>w\ge 0, v< s\\
0&&\mbox{otherwise},\end{array}\right.\label{eq:pijdensity}
\end{align}
where $\bm{\Pi}(\ell,w)=\{\Pi_{ij}(\ell,w)\}_{ij}$ may be computed via the recursion \eqref{eq:rec_aux_1}-\eqref{eq:rec_aux_4}.
\begin{align}
\bm{\Pi}(0,0)&=\bm{I}\label{eq:rec_aux_1}\\
\bm{\Pi}(k,v)&=\bm{0}\quad\mbox{for all}\quad v>k\label{eq:rec_aux_2}\\
\bm{\Pi}(k,0)& = \sum_{k'=0}^{k-1}  \bm{\Pi}(k-1,k')\bm{Q}^{\mathrm{n}}(k-1,k')\quad\mbox{for all}\quad k\ge 1\label{eq:rec_aux_3}\\
\bm{\Pi}(k,v)& = \bm{\Pi}(k-1,v-1)\bm{Q}^{\mathrm{d}}(k-1,v-1)\quad\mbox{for all}\quad k\ge v\ge 1.\label{eq:rec_aux_4}
\end{align}
Here, 
\[\bm{Q}^{\mathrm{d}}(\ell, w)=\begin{pmatrix} Q_{11}(\ell, w)& &&\\&Q_{22}(\ell, w)&&\\ &&\ddots& \\ &&& Q_{JJ}(\ell, w)\end{pmatrix}\quad\mbox{and}\quad \bm{Q}^{\mathrm{n}}(k, w) = \bm{Q}(k, w) - \bm{Q}^{\mathrm{d}}(k, w).\]
\end{theorem}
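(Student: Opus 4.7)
The plan is to decompose $\ap{p}_{ij}(s,dv)$ by conditioning on the pair $(\ell,w)=(\ap{\Gamma}(s),\ap{M}(s))$ and exploiting that, given $\Gamma$, the external clock $\ap{\Gamma}$ is independent of the state/counter sequence $\{(\eta_k,\beta_k)\}_{k\geq 0}$ (the latter being a function of $\Gamma$, the thinning assignments into $\Gamma_0\cup\Gamma'$, and the uniforms $\{Y_k\}$ alone). Writing $\tau_0=0,\tau_1,\tau_2,\dots$ for the arrival times of $\ap{\Gamma}$, one reads off from the definition of $\beta$ that on the event $\{\ap{\Gamma}(s)=\ell,\ \ap{M}(s)=\beta_\ell=w\}$, the last jump of $\ap{Z}$ on $[0,s]$ occurs at $\tau_{\ell-w}$ when $w<\ell$, while no jump has occurred when $w=\ell$. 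Thus $\ap{U}(s)=s-\tau_{\ell-w}$ in the first case and $\ap{U}(s)=s$ in the second, which already accounts for the split of $\ap{p}_{ij}(s,dv)$ into an atom at $v=s$ and an absolutely continuous piece on $[0,s)$.

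By the conditional independence just noted, and setting
\[
\Pi_{ij}(\ell,w) := \P(\eta_\ell=j,\beta_\ell=w\mid \Gamma,\eta_0=i),
\]
each summand factorises as
\[
\ap{p}_{ij}(\ell,w;s,dv) = \Pi_{ij}(\ell,w)\cdot \P\bigl(\tau_{\ell-w}\in (s-v,s-v+dv],\ \ap{\Gamma}(s)=\ell\,\big|\,\Gamma\bigr).
\]
For $w<\ell$, standard Poisson calculus gives that $\tau_{\ell-w}$ has the Erlang density $\mathrm{Erl}_{\ell-w,\gamma}(s-v)$ at $s-v$, and by the strong Markov property of $\ap{\Gamma}$ the number of arrivals in the remaining interval $(\tau_{\ell-w},s]$ is an independent $\mathrm{Poisson}(\gamma v)$ variable, which must equal $w$ with probability $\mathrm{Poi}_{\gamma v}(w)$; this delivers \eqref{eq:pijdensity}. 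For the atom $w=\ell$ (necessarily with $j=i$, since $\eta_0=\dots=\eta_\ell=i$), the corresponding probability collapses to $\P(\ap{\Gamma}(s)=\ell\mid \Gamma)=\mathrm{Poi}_{\gamma s}(\ell)$, giving \eqref{eq:pijatom1}.

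The recursion for $\bm{\Pi}(k,v)$ is then read directly off the one-step transition probabilities of the Markov chain $\{(\beta_k,\eta_k)\}_{k\geq 0}$ established in the proof of Theorem \ref{th:apz-matrix-exp}. The base case $\bm{\Pi}(0,0)=\bm{I}$ is immediate, and $\bm{\Pi}(k,v)=\bm{0}$ for $v>k$ holds because the counter cannot exceed the number of steps taken. For $k\geq v\geq 1$, reaching counter $v$ at step $k$ forces a ``stay'' transition at step $k$ from counter $v-1$, whose weight is the diagonal entry $Q_{mm}(k-1,v-1)$; this is exactly \eqref{eq:rec_aux_4}. Reaching counter $0$ at step $k$ forces a genuine state switch from some counter $k'\in\{0,\dots,k-1\}$ at step $k-1$, with off-diagonal weight $Q_{mj}(k-1,k')$; summing over $k'$ and the intermediate state $m$ yields \eqref{eq:rec_aux_3}.

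The main obstacle is keeping the conditional independence structure clean: one has to verify that conditioning on $\Gamma$ (the superposed clock) does not entangle the timings $\tau_\bullet$ of $\ap{\Gamma}$ with the random ingredients driving $(\eta_\bullet,\beta_\bullet)$, so that the joint law genuinely factors into a ``where'' piece $\bm{\Pi}(\ell,w)$ and a ``when'' piece captured by Erlang/Poisson weights. Once this factorisation is justified, the remainder is elementary Poisson process calculus together with the transition probabilities already computed in Theorem \ref{th:apz-matrix-exp}.
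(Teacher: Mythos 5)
Your argument is correct and is essentially the paper's own proof: decompose via $(\ap{\Gamma}(s),\ap{M}(s))$, use the conditional independence of $\ap{\Gamma}$ from $(\eta_\cdot,\beta_\cdot)$ given $\Gamma$ to factor into $\Pi_{ij}(\ell,w)$ times an Erlang--Poisson weight, and derive the recursion from the one-step transitions of $\{(\beta_k,\eta_k)\}$. The only difference is presentational; you make the conditional-independence factorisation explicit where the paper states it tersely.
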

\begin{proof}
Define the enlarged transition probabilities 
\begin{align*}\ap{p}_{ij}&(\ell, w; s, \dd v) = \P(\ap{\Gamma}(s)=\ell, \ap{M}(s)=w, \ap{Z}(s)=j, \ap{U}(s)\in\dd v \mid \Gamma,\, \ap{Z}(0)=i,\, \ap{U}(0)=0).
\end{align*}
By the law of total probability, it is clear that Equation \eqref{ltp_1} holds. Now, let $X_k=(\beta_k,\eta_k)$ be defined as in the proof of Theorem \ref{th:apz-matrix-exp}, and let
\begin{equation}\label{eq:Pidef1}\Pi_{ij}(\ell,w):=\P(X_{\ell}=(w,j)\mid \Gamma,\,X_{0}=(0,i)),\quad \ell,w\ge 0;\end{equation}
in the following we relate $\Pi_{ij}(\ell,w)$ with $\ap{p}_{ij}(\ell, w; s, \dd v)$, and show that it follows the recursion \eqref{eq:rec_aux_1}-\eqref{eq:rec_aux_4}.

\textbf{Case $v=s$.} Under this event, $\ap{Z}$ does not change states in the interval $[0,s]$, so the only non-zero possibility is $i=j$ and $\ell=w$, which we study next.
Equation \eqref{eq:pijatom1} then follows by conditioning on the number of jumps of $\ap{\Gamma}$ in $[0,s]$, (which follows a probability mass function  $\mathrm{Poi}_{\gamma s}(\ell)$) and multiplying by the grid-conditional probability of 
\[\{X_{\ell'}*\notin\{0\}\times\mathcal{J}\mbox{ and }\; X_{\ell'}*\in\N_0\times\{i\}\mbox{ for all }1\le\ell'\le \ell\;\}=\{X_{\ell}=(w,j)\},\]
where we employed that $\ell=w$ and thus $\eta_{0}=\eta_{1}=\cdots=\eta_\ell=i$.

\textbf{Case $0<v<s$.} In this event, a restart is bound to happen in $s-v\in [0,s]$, so the only non-zero case is when $\ell>w$. Furthermore, $\ap{\Gamma}(s-v-)\neq \ap{\Gamma}(s-v)=\ell-w$, $\ap{M}(s-v)=0$, and $\ap{Z}(s-v-)\neq \ap{Z}(s-v)=j$. After $s-v$, $\ap{Z}$ needs to remain in $j$ for $w$ incremental steps of $\ap{\Gamma}$ (or $\ap{M}$). In other words, by conditioning on the $(\ell-w)$th arrival of $\ap{\Gamma}$ at $s-v$ (which follows an $\mathrm{Erl}_{\ell-w,\gamma}(x)$) and on $w$ arrivals of $\ap{\Gamma}$ occurring in the interval $(s-v, s]$ (occurring with probability $\mathrm{Poi}_{\gamma v} (w)$), and multiplying by the grid-conditional probability of $\{X_{\ell}=(w,i)\}$ (event that captures the aforementioned dynamics), we obtain Equation \eqref{eq:pijdensity}.

We now discuss how the recursion \eqref{eq:rec_aux_1}-\eqref{eq:rec_aux_4} arises. Equation \eqref{eq:rec_aux_1} is a straightforward initial condition, \eqref{eq:rec_aux_2} describes the impossibility of having more arrivals associated with $\ap{M}$ than with $\ap{\Gamma}$. Concerning Equations \eqref{eq:rec_aux_3} and \eqref{eq:rec_aux_4}, they are consequences of conditioning one step prior to $\ell$ (in terms of the process $\{X_k\}_{k\ge 0}$ in \eqref{eq:Pidef1}) and employing the law of total probability.
\end{proof}

\subsection{Value function evaluation}

Now that we have a tractable expression for the grid-conditional transition probabilities of $(\ap{Z},\ap{U})$, we employ these to provide approximations to the value function $V(t; i,u)$. However, without loss of generality, we focus on obtaining approximations to {$V_{i}:=V(0; i,0)$} only: time-shifting arguments akin to those presented in Subsection \ref{sec:SMJP} yield the general case.

Thus, we define the grid-conditional approximation as
 \begin{align}
\ap{V}_i& = \E\left[\int_0^T e^{-{ \int_0^s}r}\dd \ap{B}(s) \mid \Gamma, { \ap{Z}(0)=i,\ap{U}(0)=0}\right], \\
\dd \ap{B}(t)&= \dd B^{\ap{Z}(t),\ap{U}(t)} + \sum_{k \neq \ap{Z}(t-)}b^{(\ap{Z}(t-),\ap{U}(t-)),(k,0)}(t) \dd \ap{N}^{k}(t), \\
\ap{N}^{k}& = \sum_{s\in[0,t]} \1\{\ap{Z}(s)=k,\ap{Z}(s-)\neq k\}.
\end{align}
Based on our findings in Theorem \ref{conditional_transitions_approx}, we now provide a remarkably tractable expression for $\ap{V}_i$.


\begin{corollary}\label{cor:appValue1}
For $i,j,k\in\mathcal{J}$,$j\neq k$ and $s\ge 0$, let
\begin{align}
\ap{p}_{ij}(s, \dd v) & = \ap{p}_{ij}^{\mathrm{a}}(s)\delta_{s}(v) + \ap{p}_{ij}^{\mathrm{c}}(s, v)\dd v,\label{eq:aptpij}\\
\ap{p}_{i;jk}(s, \dd v) & = \ap{p}_{i;jk}^{\mathrm{a}}(s)\delta_{s}(v) + \ap{p}_{i;jk}^{\mathrm{c}}(s, v)\dd v,\label{eq:aptpijk}
\end{align}
where
\begin{align*} 
&\ap{p}_{ij}^{\mathrm{a}}(s) = \sum_{\ell\ge 0} \ap{p}_{ij}^{\mathrm{a}}(\ell; s),\qquad\ap{p}_{i;jk}^{\mathrm{a}}(s) = \sum_{\ell\ge 0} \ap{p}_{ij}^{\mathrm{a}}(\ell; s)(\gamma Q_{jk}(\ell,\ell)),\\
&\ap{p}_{ij}^{\mathrm{c}}(s,v) = \sum_{\ell, w\,:\, w< \ell} \ap{p}_{ij}^{\mathrm{c}}(\ell, w; s,v),\qquad \ap{p}_{i;jk}^{\mathrm{c}}(s,v) = \sum_{\ell, w\,:\, w< \ell} \ap{p}_{ij}^{\mathrm{c}}(\ell, w; s,v)(\gamma Q_{jk}(\ell,w)),\\
&\ap{p}_{ij}^{\mathrm{a}}(\ell; s) = \mathrm{Poi}_{\gamma s}(\ell)\; {\Pi}_{ii}(\ell, \ell)\;\mathds{1}_{\{i=j\}},\\
&\ap{p}_{ij}^{\mathrm{c}}(\ell, w; s,v) = \mathrm{Erl}_{\ell-w,\gamma}(s-v) \; \mathrm{Poi}_{\gamma v} (w)\; {\Pi}_{ij}(\ell, w)\;\mathds{1}_{\{\ell>w\ge 0, v<s\}}.
\end{align*}
Then,
\begin{align}
\ap{V}_{i}& = \int_{s=0}^T e^{-\int_0^s r} \sum_{j\in\mathcal{J}}\int_{v=0}^s \ap{p}_{ij}(s,\dd v) \dd B^{j,v}(s)\nonumber\\
&\qquad + \int_{s=0}^T e^{-\int_0^s r} \sum_{\substack{j,k\in\mathcal{J}\\ j\neq k}}\int_{v=0}^s \ap{p}_{i;jk}(s,\dd v) b^{(j,v),(k,0)}(s)\dd s.\label{eq:Vi1}
\end{align}
\end{corollary}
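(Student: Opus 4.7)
The plan is to expand $\ap{V}_i$ and handle the sojourn and jump contributions of $\dd \ap{B}$ separately, leveraging Theorem \ref{conditional_transitions_approx} for the marginal conditional law of $(\ap{Z}(s),\ap{U}(s))$ and Theorem \ref{th:apz-matrix-exp} for the conditional jump intensity of $\ap{Z}$.

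First I would substitute the defining expression for $\dd\ap{B}(t)$ into the conditional expectation, use Fubini to push it inside the time integral, and split $\ap{V}_i$ into a sojourn piece $\int_0^T e^{-\int_0^s r}\,\E[\dd B^{\ap{Z}(s),\ap{U}(s)}(s)\mid \mathcal{G}]$ and a transition piece $\sum_{k}\int_0^T e^{-\int_0^s r}\,\E[b^{(\ap{Z}(s-),\ap{U}(s-)),(k,0)}(s)\,\dd\ap{N}^k(s)\mid\mathcal{G}]$, where $\mathcal{G}$ abbreviates the conditioning $\sigma(\Gamma)\vee\{\ap{Z}(0)=i,\ap{U}(0)=0\}$.

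For the sojourn piece, Theorem \ref{conditional_transitions_approx} supplies the $\mathcal{G}$-conditional law of $(\ap{Z}(s),\ap{U}(s))$ as the measure $\ap{p}_{ij}(s,\dd v)$, so the inner expectation equals $\sum_{j\in\mathcal{J}}\int_{v=0}^s \ap{p}_{ij}(s,\dd v)\,\dd B^{j,v}(s)$; decomposing $\ap{p}_{ij}$ via \eqref{eq:aptpij} into its atomic and absolutely continuous parts is purely cosmetic. For the transition piece, Theorem \ref{th:apz-matrix-exp} asserts that $(\ap{\Gamma},\ap{M},\ap{Z})$ is a $\mathcal{G}$-conditional time-homogeneous Markov jump process whose intensity for the transition $(\ell,w,j)\to(\ell+1,0,k)$ equals $\gamma Q_{jk}(\ell,w)$ when $k\neq j$, so $\ap{N}^k$ admits the $\mathcal{G}$-compensator $\int_0^{\cdot}\gamma Q_{\ap{Z}(s-),k}(\ap{\Gamma}(s-),\ap{M}(s-))\,\1\{\ap{Z}(s-)\neq k\}\,\dd s$. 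Replacing left-limits by right-limits (agreeing Lebesgue-a.e.\ in $s$) and marginalizing over the auxiliary indices $(\ell,w)$ produces an integrand of the form $\sum_{j\neq k}b^{(j,v),(k,0)}(s)\,\bigl(\sum_{\ell,w:w\le\ell}\gamma Q_{jk}(\ell,w)\,\P(\ap{Z}(s)=j,\ap{U}(s)\in\dd v,\ap{\Gamma}(s)=\ell,\ap{M}(s)=w\mid\mathcal{G})\bigr)\,\dd s$, whose bracketed sum is exactly $\ap{p}_{i;jk}(s,\dd v)$ as defined in the statement: on the atomic $v=s$ the configuration forces $\ell=w$ and contributes $\gamma Q_{jk}(\ell,\ell)$, whereas on $\{v<s\}$ we have $w<\ell$ and contribute $\gamma Q_{jk}(\ell,w)$. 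Combining both pieces yields \eqref{eq:Vi1}.

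The main obstacle I anticipate is writing down the compensator of $\ap{N}^k$ in the grid-conditional filtration rigorously — $\ap{Z}$ alone is not $\mathcal{G}$-conditionally Markov, but the enlarged process $(\ap{\Gamma},\ap{M},\ap{Z})$ is — together with the Fubini/Tonelli interchange of the infinite sum over $(\ell,w)$ with the time integral; both steps are routine once one operates in the enlarged state space afforded by Theorem \ref{th:apz-matrix-exp}.
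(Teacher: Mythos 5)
Your proposal is correct and follows essentially the same route as the paper: split $\ap{V}_i$ into sojourn and transition contributions, read the sojourn piece directly off the $\Gamma$-conditional law $\ap{p}_{ij}(s,\dd v)$ from Theorem~\ref{conditional_transitions_approx}, and for the transition piece pass to the enlarged process $(\ap{\Gamma},\ap{M},\ap{Z})$ whose $\Gamma$-conditional jump intensities (Theorem~\ref{th:apz-matrix-exp}) give the compensator of the relevant counting measure, then marginalize over $(\ell,w)$. The paper phrases this last step via the decomposition $\ap{N}_{jk}=\sum_{\ell,w}\ap{N}_{jk;\ell,w}$ with each piece having intensity $\mathds{1}_{\{\ap{\Gamma}(s-)=\ell,\ap{M}(s-)=w,\ap{Z}(s-)=j\}}\gamma Q_{jk}(\ell,w)$, which is the same computation as your compensator-plus-marginalization argument.
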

\begin{proof}
First, note that \eqref{eq:aptpij} is simply a re-statement of Theorem \ref{conditional_transitions_approx}.
The first summand in the r.h.s. of \eqref{eq:Vi1} follows by noting that
\begin{align*}
& \E\Big(\int_0^T e^{-{ \int_0^s} r}\sum_{j\in\mathcal{J}} \mathds{1}_{\{\ap{Z}(t)=j\}} \dd B^{j,\ap{U}(s)} (s) \;\Big|\; \Gamma, \ap{Z}(0)=i, \ap{U}(0)=0 \Big)\\
& \quad = \int_0^T e^{-{ \int_0^s} r}\sum_{j\in\mathcal{J}} \E\left(\left. \mathds{1}_{\{\ap{Z}(t)=j\}} \dd B^{j,\ap{U}(s)} (s) \;\right|\; \Gamma, \ap{Z}(0)=i, \ap{U}(0)=0 \right)\\
& \quad = \int^T_{s=0} e^{-\int_0^s r} \sum_{j\in\mathcal{J}}\int_{v=0}^s \ap{p}_{ij}(s,\dd v) \dd B^{j,v}(s) .
\end{align*}

For the second summand, observe that for $j\neq k$,
\[\ap{N}_{jk}(t)= \sum_{\substack{\ell,w\\ w\le \ell}} \ap{N}_{jk;\ell,w}(t),\]
where $\ap{N}_{jk;\ell,w}$ represents the counting process linked to the jumps of $\ap{Z}$ from $j$ to $k$ occurring while $\ap{\Gamma}$ is $\ell$ and $\ap{M}$ is $w$ (before the jump). This counting process, $\ap{N}_{jk;\ell,w}$, has a jump intensity described by $\mathds{1}_{\ap{N}(s-)=\ell, \ap{M}(s-)= w, \ap{Z}(s-)=j} (\gamma Q_{jk}(\ell,w))$. By the general theory of counting processes (see \cite{last1995marked} and more generally \cite{bremaud1981point}) this leads to

\begin{align*}
\E&\Big(\int_0^T e^{-{ \int_0^s} r} \sum_{\substack{j,k\in\mathcal{J}\\j\neq k}}b^{(j,\ap{U}(s-)),(k,0)}(s)\,\dd \ap{N}_{jk}(s) \;\Big|\; \Gamma, \ap{Z}(0)=i, \ap{U}(0)=0 \Big)\\
& \quad = \int_0^T e^{-{ \int_0^s} r} \sum_{\substack{j,k\in\mathcal{J}\\j\neq k}} \sum_{\substack{\ell,w\\ w\le \ell}} \E\left(\left. b^{(j,\ap{U}(s-)),(k,0)} (s)\,\dd \ap{N}_{jk;\ell, w}(s) \;\right|\;\Gamma, \ap{Z}(0)=i, \ap{U}(0)=0 \right)\\
& \quad = \int_{s=0}^T e^{-\int_0^s r} \sum_{\substack{j,k\in\mathcal{J}\\ j\neq k}}\sum_{\substack{\ell,w\\w\le \ell}} \int_{v=0}^s \ap{p}_{ij}(\ell,w;s,\dd v) (\gamma Q^{\mathrm{n}}_{jk}(\ell, w))b^{(j,v),(k,0)}(s)\,\dd s\\
&\quad = \int_{s=0}^T e^{-\int_0^s r} \sum_{\substack{j,k\in\mathcal{J}\\, j\neq k}}\int_{v=0}^s \ap{p}_{i;jk}(s,\dd v) b^{(j,v),(k,0)}(s)\dd s,
\end{align*}

where the final equality is a result of reorganizing the terms in \eqref{eq:aptpijk}.

\end{proof}

{
A similar approximation holds for the expected cashflow { $\mathcal{C}_i:=\mathcal{C}_{0;i,0}$} in \eqref{cashflow_definition} by defining
\begin{align*}
\ap{C}_i(\dd s) = \E\left[\dd \ap{B}(s) \mid \Gamma, { \ap{Z}(0) = i, \ap{U}(0) = 0}\right].
\end{align*}
An analogous procedure to the one that leads to \eqref{eq:Vi1} yields the formula
\begin{align}\label{cashflow_approximation}
\ap{C}_{i}(\dd s) &= \sum_{j \in \mathcal{J}} \int_{v=0}^s \ap{p}_{ij}(s, \dd v) \, \dd B^{j,v}(s) + \left( \sum_{\substack{j,k \in \mathcal{J} \\ j \neq k}} \int_{v=0}^s \ap{p}_{i;jk}(s, \dd v) \, b^{(j,v),(k,0)}(s) \right) \dd s.
\end{align}
}

To conclude this section, we emphasize that most of the results presented up to this point concern \emph{grid-conditional} probabilities or expectations. While these hold under fairly robust assumptions, dealing with conditional quantities implies that the formulas are, in essence, random variables that converge to a constant when $\gamma\rightarrow \infty$. Although applying a Monte Carlo method would accelerate the convergence to the desired descriptor, this would inevitably require more computing power. In the next section, under certain regularity conditions, we provide an alternative approximation that circumvents the need for grid-conditional quantities and offers unconditional ones instead.

\section{Unconditional approximations of semi-Markov processes}\label{sec:unconditional} 

Note that the grid-conditional transition probabilities $\ap{p}_{ij}$ in Theorem \ref{conditional_transitions_approx} inherit their randomness from the matrices $\bm{Q}^{\mathrm{d}}(\ell,w)$ and $\bm{Q}^{\mathrm{n}}(\ell,w)$, themselves being defined through the matrix $\bm{Q}(\ell,w)=\bm{I} + \tfrac{1}{\gamma}\bm{\Lambda}(\chi_{\ell+1}, \chi_{\ell+1}-\chi_{\ell-w})$. Here we propose to use the matrix $\tld{\bm{Q}}(\ell,w)$ instead of $\bm{Q}(\ell,w)$, where
\begin{align}\tld{\bm{Q}}(\ell,w) = \bm{I} + \tfrac{1}{\gamma}\bm{\Lambda}\left(\tfrac{\ell+1}{\gamma}, \tfrac{w+1}{\gamma}\right).
\end{align}
Note that we are effectivelly removing the randomess by replacing $\chi_{\ell+1}$ and $\chi_{\ell+1}-\chi_{\ell-w}$ with their expected values, $\tfrac{\ell+1}{\gamma}$ and $\frac{w+1}{\gamma}$. Heuristically, the strong law of large numbers implies that, indeed, each $\chi_\ell$ converges to their expected value. In fact, as evidenced by \cite[Lemma 1, Eq (17)]{bladt_peralta} -- included here in the formula \eqref{eq:grid-conv1} of Appendix \ref{sec:convergence_densities} -- the stochastic grid $\Gamma=\{\chi_\ell\}_{\ell\ge 0}$ converges uniformly to the deterministic grid $\{\ell/\gamma\}_{\ell\ge 0}$ over increasing compact intervals. As expected, in order for $\tld{\bm{Q}}(\ell, w)$ to inherit this convergence, we need some type of continuity assumption for $\bm{\Lambda}$. In this section we assume that $\bm{\Lambda}(s,v)=\{\Lambda_{ij}(s,v)\}$ is entrywise Lipschitz continuous, { equivalent to assuming that} there exists some $K>0$ such that
{
\begin{align}
\sup_{i\in\mathcal{J}}\sum_{j\in\mathcal{J}}\left| \Lambda_{ij}(s_1,v_1) - \Lambda_{ij}(s_2,v_2)\right|\le K\left(|s_1-s_2| + |v_1-v_2|\right).\label{eq:LipschitzLambda1}
\end{align}
}
Under the assumption \eqref{eq:LipschitzLambda1},
{
\begin{align}
\sup_{i\in\mathcal{J}}\sum_{j\in\mathcal{J}} \left|Q_{ij}(\ell, w)-\tld{Q}_{ij}(\ell, w)\right|& =\gamma^{-1}\sup_{i\in\mathcal{J}}\sum_{j\in\mathcal{J}}\left|\Lambda_{ij}(\chi_{\ell+1}, \chi_{\ell+1}-\chi_{\ell-w})-\Lambda_{ij}\left(\tfrac{\ell+1}{\gamma}, \tfrac{w+1}{\gamma}\right)\right|\nonumber\\
& \le \gamma^{-1} K \left[\left|\chi_{\ell+1}-\tfrac{\ell+1}{\gamma}\right| + \left|\left(\chi_{\ell+1}-\chi_{\ell-w}\right) - \left(\tfrac{w+1}{\gamma}\right)\right|\right]\nonumber\\
& \le \gamma^{-1} K \left[\left|\chi_{\ell+1}-\tfrac{\ell+1}{\gamma}\right| + \left|\left(\chi_{\ell+1}-\chi_{\ell-w}\right) - \left(\tfrac{\ell+1}{\gamma} - \tfrac{\ell-w}{\gamma}\right)\right|\right]\nonumber\\
& \le \gamma^{-1} K \left[\left|\chi_{\ell+1}-\tfrac{\ell+1}{\gamma}\right| + \left|\chi_{\ell+1}-\tfrac{\ell+1}{\gamma}\right| + \left|\chi_{\ell-w} -  \tfrac{\ell-w}{\gamma}\right|\right]\nonumber\\
&\le \gamma^{-1}3K\sup_{\ell'\le \ell+1}\left|\chi_{\ell}-\tfrac{\ell}{\gamma}\right|,\label{eq:Qbound7}
\end{align}
}
confirming that $\tld{\bm{Q}}(\ell,w)$ converges to $\bm{Q}(\ell,w)$, at an $o(\gamma^{-1})$ rate, as long as the maximum distance between $\chi_{\ell'}$ and $\ell'/\gamma$ for all $\ell'\le \ell +1$ converges to $0$, which is guaranteed by \eqref{eq:grid-conv1} of Appendix \ref{sec:convergence_densities}. 

The next step is identifying how the convergence of $\tld{\bm{Q}}(\ell,w)$ to $\bm{Q}(\ell,w)$ implies the convergence of the associated transition probabilities. For this, consider now a Markov chain $\{\tld{X}_k\}_{k\ge 0}$ over the state space $\N_0\times \mathcal{J}$ driven by the transition probabilities
\begin{align*}\P \left(\tld{X}_{k+1}=(w',j)|\tld{X}_{k}=(w,i)\right) = \left\{\begin{array}{ccc}\tld{Q}^{\mathrm{d}}_{ii}(k,w)&\mbox{ if }&i=j, w'=w+1\ge 1\\
\tld{Q}^{\mathrm{n}}_{ij}(k,w)&\mbox{ if }&i\neq j, w'= 0, w\ge 0\\
0&&\mbox{otherwise},\end{array}\right.\end{align*}
where 
\[\tld{\bm{Q}}^{\mathrm{d}}(\ell, w)=\begin{pmatrix} \tld{Q}_{11}(\ell, w)& &&\\&\tld{Q}_{22}(\ell, w)&&\\ &&\ddots& \\ &&& \tld{Q}_{JJ}(\ell, w)\end{pmatrix}\quad\mbox{and}\quad \tld{\bm{Q}}^{\mathrm{n}}(k, w) = \tld{\bm{Q}}(k, w) - \tld{\bm{Q}}^{\mathrm{d}}(k, w).\]
Now, we uniformize the three-dimensional process $\{(k,X_k)\}_{k\ge 0}$ over a Poisson process $\apt{\Gamma}$ of intensity $\gamma$, resulting in a continuous-time Markov jump process $(\apt{\Gamma},\apt{M},\apt{Z})$. By arguments analogous to those presented in Theorem \ref{conditional_transitions_approx}, we get that the associated transition probabilities { are explicitly computable}. Specifically, 
\[\apt{p}_{ij}(s, v) = \P(\apt{Z}(s)=j, \apt{U}(s)\le v \mid \, \apt{Z}(0)=i,\, \apt{U}(0)=0)\]
with 
\begin{equation*}\apt{U}(t)=\inf\{r>0: \apt{Z}(t-r)\neq \apt{Z}(t)\},\end{equation*}
can be computed via 
\begin{align}
\apt{p}_{ij}(s, \dd v) & = \apt{p}_{ij}^{\mathrm{a}}(s)\delta_{s}(v) + \apt{p}_{ij}^{\mathrm{c}}(s, v)\dd v,\label{ltp_2}
\end{align}
where
\begin{align*} 
&\apt{p}_{ij}^{\mathrm{a}}(s) = \sum_{\ell\ge 0} \ap{p}_{ij}^{\mathrm{a}}(\ell; s),\quad\apt{p}_{ij}^{\mathrm{c}}(s,v) = \sum_{\ell, w\,:\, w< \ell} \apt{p}_{ij}^{\mathrm{c}}(\ell, w; s,v),\\
&\apt{p}_{ij}^{\mathrm{a}}(\ell; s) = \mathrm{Poi}_{\gamma s}(\ell)\; \tld{\Pi}_{ii}(\ell, \ell)\;\mathds{1}_{\{i=j\}},\\
&\apt{p}_{ij}^{\mathrm{c}}(\ell, w; s,v) = \mathrm{Erl}_{\ell-w,\gamma}(s-v) \; \mathrm{Poi}_{\gamma v} (w)\; \tld{\Pi}_{ij}(\ell, w)\;\mathds{1}_{\{\ell>w\ge 0, v<s\}}.
\end{align*}
Here $\tld{\bm{\Pi}}(\ell,w)=\{\tld{\Pi}_{ij}(\ell,w)\}_{ij}$ where
\begin{align*}\tld{\Pi}_{ij}(\ell,w):=\P(\tld{X}_{\ell}=(w,j)\mid \tld{X}_{0}=(0,i)),\quad \ell,w\ge 0,\end{align*}
follows the recursion
\begin{align}
\tld{\bm{\Pi}}(0,0)&=\bm{I}\label{eq:rec_aux_1t}\\
\tld{\bm{\Pi}}(k,v)&=\bm{0}\quad\mbox{for all}\quad v>k\label{eq:rec_aux_2t}\\
\tld{\bm{\Pi}}(k,0)& = \sum_{k'=0}^{k-1}  \tld{\bm{\Pi}}(k-1,k')\tld{\bm{Q}}^{\mathrm{n}}(k-1,k')\quad\mbox{for all}\quad k\ge 1\label{eq:rec_aux_3t}\\
\tld{\bm{\Pi}}(k,v)& = \tld{\bm{\Pi}}(k-1,v-1)\tld{\bm{Q}}^{\mathrm{d}}(k-1,v-1)\quad\mbox{for all}\quad k\ge v\ge 1.\label{eq:rec_aux_4t}
\end{align}

Then we have the following convergence result regarding the densities associated to $\ap{p}_{ij}(\ell, w; s, \dd v)$ and $\apt{p}_{ij}(\ell, w; s, \dd v)$; we include its proof in Appendix \ref{sec:convergence_densities}.

\begin{theorem}\label{th:convergence_densities}
For any $q>1$, $\varepsilon > 0$, there exists a constant $\alpha(q,\varepsilon)$ such that { for all $s\ge v\ge 0$},
\begin{align*}
&\P\left({ \sup_{i\in\mathcal{J}}\sum_{j\in\mathcal{J}}}\left|\ap{p}_{ij}^{\mathrm{a}}(s) - \apt{p}_{ij}^{\mathrm{a}}( s)\right|\le (s+1)\,\alpha(q,\varepsilon)\,(\log \gamma)\gamma^{-1/2 + \varepsilon/2}\right)=o(\gamma^{-q}),\\
&\P\left({ \sup_{i\in\mathcal{J}} \int_0^v \sum_{j\in\mathcal{J}}\left| \ap{p}_{ij}^{\mathrm{c}}(s,v') - \apt{p}_{ij}^{\mathrm{c}}(s,v')\right| \dd v'}\le (s+1)\,\alpha(q,\varepsilon)\,(\log \gamma)\gamma^{-1/2 + \varepsilon/2}\right)=o(\gamma^{-q}).
\end{align*}
\end{theorem}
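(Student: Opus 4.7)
The plan is to trace the discrepancy between the two density representations stated just before the theorem: since the Poisson and Erlang weights are identical on both sides, and the indicators $\mathds{1}_{\{i=j\}}$ and $\mathds{1}_{\{\ell>w\ge 0\}}$ are common, the only source of difference is $\bm{\Pi}(\ell,w)$ versus $\tld{\bm{\Pi}}(\ell,w)$. I would first rewrite
\[ \ap{p}^{\mathrm{a}}_{ij}(s) - \apt{p}^{\mathrm{a}}_{ij}(s) = \mathds{1}_{\{i=j\}}\sum_{\ell\ge 0}\mathrm{Poi}_{\gamma s}(\ell)\bigl(\Pi_{ii}(\ell,\ell) - \tld{\Pi}_{ii}(\ell,\ell)\bigr), \]
and for the continuous part apply Tonelli together with $\int_0^v \mathrm{Erl}_{\ell-w,\gamma}(s-v')\,dv' \le 1$ to reduce it to a similar Poisson-weighted sum of $|\Pi_{ij}(\ell,w) - \tld{\Pi}_{ij}(\ell,w)|$. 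The task then splits into: (a) an $\ell$-dependent entrywise bound on the $\bm{\Pi}$-difference on a good event, and (b) Poisson-tail control off that event.

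For (a), I would import from \eqref{eq:grid-conv1} of Appendix \ref{sec:convergence_densities} the event $A_\gamma$ on which $\sup_{\ell'\le N_\gamma}|\chi_{\ell'} - \ell'/\gamma| \le \alpha'(q,\varepsilon)(\log\gamma)\gamma^{-1/2+\varepsilon/2}$, taking the threshold $N_\gamma = \lceil 2\gamma(s+1)\rceil$ so that $\P(A_\gamma^c) = o(\gamma^{-q})$. Combined with \eqref{eq:Qbound7}, this yields on $A_\gamma$ the deterministic entrywise bound
\[ \delta_Q(\gamma) := \max_{i,j}\,\bigl|Q_{ij}(\ell,w) - \tld{Q}_{ij}(\ell,w)\bigr| \;\le\; 3K\alpha'(q,\varepsilon)(\log\gamma)\gamma^{-3/2+\varepsilon/2}, \qquad \ell+1 \le N_\gamma. \]
To propagate $\delta_Q(\gamma)$ through the recursions \eqref{eq:rec_aux_1}–\eqref{eq:rec_aux_4} and \eqref{eq:rec_aux_1t}–\eqref{eq:rec_aux_4t}, I would interpret a single step of either recursion as the action of a stochastic transition kernel $K_{k-1}$ on $\mathbb{N}_0\times\mathcal{J}$ (the row sums of $\bm{Q}$ equal $1$ since $\bm{\Lambda}$ has vanishing row sums, and $\gamma\ge\gamma_0$ ensures nonnegativity). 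The perturbation identity
\[ \bm{\Pi}(k,\cdot) - \tld{\bm{\Pi}}(k,\cdot) = \bigl(\bm{\Pi}(k-1,\cdot) - \tld{\bm{\Pi}}(k-1,\cdot)\bigr)K_{k-1} + \tld{\bm{\Pi}}(k-1,\cdot)\bigl(K_{k-1} - \tld{K}_{k-1}\bigr), \]
combined with $\|\tld{\bm{\Pi}}(k-1,\cdot)\|_1 = 1$ and $\sup_x \|K_{k-1}(x,\cdot)-\tld{K}_{k-1}(x,\cdot)\|_1 \le J\delta_Q(\gamma)$, gives by induction the \emph{additive} estimate $|\Pi_{ij}(\ell,w)-\tld{\Pi}_{ij}(\ell,w)| \le J\ell\,\delta_Q(\gamma)$ for every $\ell \le N_\gamma$.

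Finally, I would substitute this into the Poisson-weighted sums, exploiting $\sum_{\ell\ge 0}\ell\,\mathrm{Poi}_{\gamma s}(\ell) = \gamma s$ to cancel the extra $\gamma^{-1}$ in $\delta_Q$. The on-$A_\gamma$ contribution from $\ell \le N_\gamma$ is then $O(s(\log\gamma)\gamma^{-1/2+\varepsilon/2})$; the $\ell>N_\gamma$ tail is bounded via a standard Chernoff estimate for $\mathrm{Poi}(\gamma s)$ at level $2\gamma(s+1)$ and is superpolynomially small in $\gamma$; and the off-$A_\gamma$ contribution is $o(\gamma^{-q})$ by construction. Collecting these terms produces both displays in the theorem, with the extra $+1$ in the prefactor $(s+1)$ absorbing the Chernoff constants and the $s=0$ boundary. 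The continuous-part inequality follows by the same argument once the Erlang integral has been dominated by one.

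The decisive obstacle is obtaining a sufficiently strong form of the grid estimate \eqref{eq:grid-conv1}: a single event of probability $1-o(\gamma^{-q})$ for every $q>1$ must simultaneously control $|\chi_{\ell'} - \ell'/\gamma|$ uniformly on the polynomial horizon $\ell'\le 2\gamma(s+1)$ at the rate $(\log\gamma)\gamma^{-1/2+\varepsilon/2}$. This trio of requirements is exactly what is needed so that the $\ell$-linear accumulation in the recursion, combined with the Poisson mean $\gamma s$, still beats the $\gamma^{-3/2+\varepsilon/2}$ single-step bound and produces the claimed rate. The recursion step is conceptually simple but requires care to keep the compounding additive (via the stochasticity of $K_{k-1}$) rather than multiplicative; the substantive probabilistic work is concentrated in the grid bound imported from \cite{bladt_peralta}.
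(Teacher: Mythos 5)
Your proposal follows essentially the same route as the paper's proof: decompose into a good grid event, an $\ell$-linear propagation of the entrywise $\bm{Q}$-error through the recursions, and a Poisson tail beyond the truncation horizon; use the Lipschitz hypothesis \eqref{eq:LipschitzLambda1} together with the imported grid estimate \eqref{eq:grid-conv1} to control $\delta_Q$; and finally cash in $\sum_\ell \ell\,\mathrm{Poi}_{\gamma s}(\ell)=\gamma s$ to absorb the extra $\gamma^{-1}$. Your stochastic-kernel phrasing of the perturbation step,
\[
\bm{\Pi}(k,\cdot)-\tld{\bm{\Pi}}(k,\cdot)=\bigl(\bm{\Pi}(k-1,\cdot)-\tld{\bm{\Pi}}(k-1,\cdot)\bigr)K_{k-1}+\tld{\bm{\Pi}}(k-1,\cdot)\bigl(K_{k-1}-\tld{K}_{k-1}\bigr),
\]
with the observation that the rows of $\bm{Q}$ sum to one, is a cleaner packaging of what the paper establishes in Lemma \ref{lem:boundPi} via an induction over subsets $S\subseteq\{0,\dots,k\}$ (the subset is precisely what lets the induction handle the restart branch \eqref{eq:rec_aux_3} where all $k'$ aggregate into $w=0$); the two are equivalent, and both yield $\sum_{w}\sum_j|\Pi_{ij}(\ell,w)-\tld{\Pi}_{ij}(\ell,w)|\le \ell\,C_\ell$. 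Your choice of horizon $N_\gamma=\lceil 2\gamma(s+1)\rceil$ with a Chernoff tail is a harmless variant of the paper's $s$-independent horizon $\lfloor\gamma^{1+\varepsilon}\rfloor$, which instead controls the tail as $2\,\P(\chi_{\lfloor\gamma^{1+\varepsilon}\rfloor+1}<s)$; both are superpolynomially small and contribute the $+1$ in the $(s+1)$ prefactor.

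The one step that, as written, does not go through is the reduction of the continuous part. Bounding $\int_0^v \mathrm{Erl}_{\ell-w,\gamma}(s-v')\,\dd v'\le 1$ does not by itself produce a Poisson weight in $\ell$: the $\mathrm{Poi}_{\gamma v'}(w)$ factor sits inside the $v'$-integral, and if you simply dominate one factor by $1$ you are left with an unweighted double sum $\sum_{\ell\le N_\gamma}\sum_{w<\ell}|\Pi_{ij}(\ell,w)-\tld{\Pi}_{ij}(\ell,w)|\le N_\gamma^2\,C_{N_\gamma}$, which is of order $(s+1)^2(\log\gamma)\,\gamma^{1/2+\varepsilon/2}$ and diverges. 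What is actually needed is the exact Erlang--Poisson convolution identity used by the paper,
\[
\int_0^s \mathrm{Erl}_{\ell-w,\gamma}(s-v')\,\mathrm{Poi}_{\gamma v'}(w)\,\dd v' = \mathrm{Poi}_{\gamma s}(\ell),
\]
which converts the continuous-part integral into the same $\mathrm{Poi}_{\gamma s}(\ell)$-weighted sum as the atomic part, so that $\sum_\ell\mathrm{Poi}_{\gamma s}(\ell)\,\ell\,C_\ell\le C_{N_\gamma}\,\gamma s$ gives the claimed $(s+1)$-linear rate. With that substitution, the remainder of your argument is sound.
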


{ Particularly, Theorem \ref{th:convergence_densities} implies that the distance between the transition probabilities of $\ap{Z}$ and $\apt{Z}$ converges almost surely to $0$. Since Theorem \ref{th:strong_simp} establishes that the transition probabilities of $\ap{Z}$ converge to those of $Z$, the same holds for $\apt{Z}$. This result {is similar to} Conjecture 4.7 of \cite{ahmad2023aggregate}, where the authors define a class of jump processes called aggregated Markov models with resets, positing that these models are dense within the set of semi-Markov models. {We verify a similar result, namely that the class of (infinite-state-space) time-homogeneous Markov jump processes is sufficiently robust to approximate semi-Markov processes arbitrarily well. We next provide the statement of the precise result.}

{
\begin{corollary}
Fix an intensity-driven semi-Markov process \( Z \) on a state-space \(\mathcal{J}\) with bounded jump intensity \(\bm{\Lambda}(\cdot, \cdot)\). There exists a sequence of grid-conditional, infinite-block, time-homogeneous Markov jump processes \(\{(\ap{\Gamma}^\gamma, \ap{M}^\gamma, \ap{Z}^\gamma)\}_{\gamma > 0}\) defined over the state-space \(\mathbb{N}_0 \times \mathbb{N}_0 \times \mathcal{J}\), with intensities given by \eqref{eq:intensities_aux_1}, such that \(\ap{Z}^\gamma\) converges weakly (in the \( J_1 \)-topology sense) to \( Z \) as \(\gamma \to \infty\). Furthermore, under the Lipschitz assumption \eqref{eq:LipschitzLambda1}, the unconditional infinite-block, time-homogeneous Markov jump processes \(\{(\ap{\Gamma}^\gamma, \ap{M}^\gamma, \ap{Z}^\gamma)\}_{\gamma > 0}\) satisfy that the transition probabilities of \(\ap{Z}^\gamma\) converge pointwise to the transition probabilities of \( Z \) as \(\gamma \to \infty\).
\end{corollary}
}

{ In comparison to the methodology of \cite{ahmad2023aggregate}, our approach is closely related to their aggregate Markov models. Specifically, they examine a class of finite state-space, block-structured jump processes where the duration is reset upon exiting each block. While our model also employs a block structure, it differs by embedding all dependencies in the discrete augmentation of the state-space. We emphasize that, technically, our countably-infinite state-space model does not align with aggregate Markov models. However, with appropriate state-space truncation, our model is indeed a special case of theirs. As such, stochastic payments results in \cite{ahmad2023aggregate} for aggregate Markov models, appearing in connection with policyholder behaviour (e.g. stochastic retirement), carry over directly to our state-space truncated models. Extensions to the non-truncated model is still an open question.}

Finally, having ensured that the unconditional transition probabilities converge to the same 
(almost sure) limit of the grid-conditional ones, and that this limit corresponds to the transition probabilities of the original process $(Z,U)$, we propose using the unconditional transition probabilities to approximate the value of $V_i$ in a manner akin to that of $\ap{V}_i$. Specifically, we consider the approximation $\apt{V}_i$ where
\begin{align*}
\apt{V}_{i}& = \int_{s=0}^T e^{-\int_0^s r} \sum_{j\in\mathcal{J}}\int_{v=0}^s \apt{p}_{ij}(s,\dd v) \dd B^{j,v}(s)\\
&\qquad + \int_{s=0}^T e^{-\int_0^s r} \sum_{\substack{j,k\in\mathcal{J}\\ j\neq k}}\int_{v=0}^s \apt{p}_{i;jk}(s,\dd v) b^{(j,v),(k,0)}(s)\dd s,
\end{align*}
for $\apt{p}_{ij}(s, \dd v)$ defined as in \eqref{ltp_2}, and
\begin{align*}
\apt{p}_{i;jk}(s, \dd v) & = \apt{p}_{i;jk}^{\mathrm{a}}(s)\delta_{s}(v) + \apt{p}_{i;jk}^{\mathrm{c}}(s, v)\dd v,\\
\apt{p}_{i;jk}^{\mathrm{a}}(s) & = \sum_{\ell\ge 0} \apt{p}_{ij}^{\mathrm{a}}(\ell; s)(\gamma \tld{Q}_{jk}(\ell,\ell)),\\
\apt{p}_{i;jk}^{\mathrm{c}}(s,v) &= \sum_{\ell, w\,:\, w< \ell} \apt{p}_{ij}^{\mathrm{c}}(\ell, w; s,v)(\gamma \tld{Q}_{jk}(\ell,w)).
\end{align*}
Note that the approximations $\ap{V}_i$ and $\apt{V}_i$ are asymptotically equivalent and for small precision, virtually indistinguishable.
{
\section{Semi-markov processes on general state-spaces and their approximations}
 Up to this point, we have only dealt with the case the semi-Markov process takes values over a finite state-space, namely, $\mathcal{J}$: this was done (mostly) for pedagogical purposes. In this section, we relax such condition considerably. In fact, employing the appropriate mathematical definitions, next we show how our methods can be generalized to the case $\mathcal{J}$ is a general (measurable) state-space. { Not only is this a theoretical improvement on its own, but it also opens up the possibility of studying practical insurance models requiring a general state-space, see for instance \cite{sokol2015generic} for further constructions in this direction.}

Throughout this section, let $(Z,U)$ be the semi-Markov process taking values on $\mathcal{J}$, a general space endowed with a $\sigma$-algebra $\mathscr{J}$ that contains $\{x\}$ for all $x\in\mathcal{J}$. Here we consider the case in which $Z$ is driven by a family of time- and duration-inhomogeneous intensity (signed) kernel $\Lambda(s,v):$, $s,v\ge 0$, in the sense that
\begin{equation}\label{eq:general1}\mathds{P}(Z(s+h)\in A\mid Z(s)=x,U(s)=v)=\delta_x(A)+\Lambda(s,v)(x,A) h+ o(h), \quad h>0,
\end{equation}
where $\delta_x$ denotes the Dirac measure on $x$. To avoid encountering technical difficulties, here we assume that for all $x\in \mathcal{J}$ and $A\in\mathscr{J}$, the mapping $s\mapsto \Lambda(s,v)(x,A)$ is c\`adl\`ag for all $v\ge 0$, the mapping $v\mapsto \Lambda(s,v)(x,A)$ is c\`adl\`ag for all $s\ge 0$, and that
\begin{align*}
\sup_{s,v\ge 0, x\in\mathcal{J}}|\Lambda|(s,v)(x,\{x\})\le\gamma_0<\infty,
\end{align*}
where $|\Lambda|(s,v)(x,\cdot)$ denotes the total variation measure associated to $\Lambda(s,v)(x,\cdot)$. Note that for the r.h.s. of (\ref{eq:general1}) to correspond to a probability, it must be the case that the Hahn-Jordan decomposition $\Lambda(s,v)(x,\cdot)=\Lambda^+(s,v)(x,\cdot)-\Lambda^-(s,v)(x,\cdot)$, with $\Lambda^+(s,v)(x,\cdot)
$ and $\Lambda^+(s,v)(x,\cdot)$ being nonnegative measures, is such that $\Lambda^+(s,v)(x,\cdot)$ has support on $\mathcal{J}\setminus\{x\}$, and $\Lambda^-(s,v)(x,\cdot)$ has support on $\{x\}$. Particularly,
\[|\Lambda|(s,v)(x,\{x\})=\Lambda^-(s,v)(x,\{x\}).\]
and furthermore, by the law of total probability applied to (\ref{eq:general1}),
\[\Lambda^-(s,v)(x,\{x\})=\Lambda^+(s,v)(x,\mathcal{J}\setminus\{x\}).\]
Now, the construction of $(Z,U)$ with the distributional characteristics (\ref{eq:general1}) can be achieved by the same uniformization method outlined in Section \ref{sec:SMJP}. Namely, with a Poisson process $\Gamma_0$ of intensity $\lambda_0$, arrivals of $\Gamma_0$ denoted by $T_0=0,T_1,T_2,\dots$, and a general state-space Markov chain $\zeta_0, \zeta_1,\zeta_2,\dots$ with transition probabilities given by the respective probability kernels $P_0,P_1,P_2,\dots$, where $P_\ell:\mathcal{J}\times\mathscr{J}\mapsto[0,1]$ is defined by
\[P_\ell(x,A)=\delta_x(A)+\frac{1}{\gamma_0}\Lambda(T_{\ell+1}, \upsilon_{\ell} + (T_{\ell+1}-T_\ell))(x,A),\quad\mbox{where}\]
\begin{align*}
\upsilon_{\ell+1} = \left\{\begin{array}{ccc} \upsilon_{\ell} + (T_{\ell+1}-T_\ell) & \text{if} & \zeta_{\ell+1}=\zeta_{\ell},\\
0 & \text{if} & \zeta_{\ell+1}\neq\zeta_{\ell}. \end{array}\right.
\end{align*}
In the vein of Section \ref{sec:approxgen1}, via an uniformization scheme with increasing Poissonian intensities and subordination with an independent Poisson process, we can construct two approximations: A grid-conditional one resulting in the process $(\ap{Z},\ap{U})$ that strongly converges to $(Z,U)$, and an unconditional one with transition probabilities that converge to those of $(Z,U)$.

\begin{remark}
Specifically, in the grid-conditional case, the process $(\ap{Z},\ap{U})$ converges in the $J_1$-topology, where the topology endowed to the first coordinate is the discrete one, that is, the \emph{largest topology} possible over $\mathcal{J}$. The reason behind this is because our proposed approximation visits the exact same states of the original process, so the distance between the two processes (barring time-changes) in the first coordinate is zero in the $J_1$-metric. Crucially, we note that the second coordinate, the time component, does have a natural Borel structure, under which we show convergence.
\end{remark}
With this setup, the proof in Appendix \ref{sec:strongSMJP} follows verbatim for the case of general state-space. Here, $(\ap{Z},\ap{U})$ has transition probabilities $\ap{p}:\mathcal{J}\times\mathds{R}_+\times\mathscr{J}\times\mathds{B}(\mathds{R}_+)\mapsto [0,1]$,
\[\ap{p}(s,\dd v; x,\dd y)=\mathds{P}(\ap{Z}(s)\in\dd y, \ap{U}(s)\in\dd v\mid \ap{Z}(0)=x, \ap{U}(0)=0)\]
given by
\begin{align*}
\ap{p}(s,\dd v; x,\dd y) & = \ap{p}^{\mathrm{a}}(s; x,\dd y)\delta_{s}(\dd v) + \ap{p}^{\mathrm{c}}(s, v; x,\dd y)\dd v,
\end{align*}
where
\begin{align*} 
&\ap{p}^{\mathrm{a}}(s; x,\dd y) = \sum_{\ell\ge 0} \ap{p}^{\mathrm{a}}(\ell; s; x,\dd y),\quad\ap{p}^{\mathrm{c}}(s,v; x,\dd y) = \sum_{\ell, w\,:\, w< \ell} \ap{p}^{\mathrm{c}}(\ell,w;s,v; x,\dd y),\\
&\ap{p}^{\mathrm{a}}(\ell; s;x,\dd y)= \mathrm{Poi}_{\gamma s}(\ell)\; {\Pi}(\ell, \ell;x,\dd y),\\
&\ap{p}^{\mathrm{c}}(\ell,w;s,v; x,\dd y) = \mathrm{Erl}_{\ell-w,\gamma}(s-v) \; \mathrm{Poi}_{\gamma v} (w)\; {\Pi}(\ell, w)(x,\dd y)\;\mathds{1}_{\{\ell>w\ge 0, v<s\}},
\end{align*}
and the kernel ${\Pi}(\ell, w)(\cdot,\cdot):\mathcal{J}\times\mathscr{J}\mapsto[0,1]$ can be computed recursively via
\begin{align}
{{\Pi}}(0,0)(x,\dd y)&=\delta_x(\dd y)\nonumber\\
{{\Pi}}(k,v)(x,\dd y)&={0}\quad\mbox{for all}\quad v>k\nonumber\\
{\Pi}(k,0)(x,\dd y)& = \sum_{k'=0}^{k-1}  \left({\Pi}(k-1,k'){Q}^{\mathrm{n}}(k-1,k')\right)(x,\dd y)\quad\mbox{for all}\quad k\ge 1\label{eq:gen_rec_aux_1}\\
{\Pi}(k,v)(x,\dd y)& = \left({\Pi}(k-1,v-1){Q}^{\mathrm{d}}(k-1,v-1)\right)(x,\dd y)\quad\mbox{for all}\quad k\ge v\ge 1.\label{eq:gen_rec_aux_2}
\end{align}
with
\begin{align*}
{Q}^{\mathrm{n}}(\ell,w)(x,\dd y) &= \delta_x(\dd y)-\frac{1}{\gamma}\Lambda^-(\chi_{\ell+1}, \chi_{\ell+1}-\chi_{\ell-w})(x,\dd y),\\
{Q}^{\mathrm{d}}(\ell,w)(x,\dd y) &=\frac{1}{\gamma}\Lambda^+(\chi_{\ell+1}, \chi_{\ell+1}-\chi_{\ell-w})(x,\dd y).
\end{align*}
Note that in (\ref{eq:gen_rec_aux_1}) and (\ref{eq:gen_rec_aux_2}) we employed the usual composition of kernels, namely, 
\begin{align*}
\left({\Pi}(k-1,k'){Q}^{\mathrm{n}}(k-1,k')\right)(x,\dd y):& =\int {\Pi}(k-1,k';x,\dd x'){Q}^{\mathrm{n}}(k-1,k';x',\dd y),\\
\left({\Pi}(k-1,v-1){Q}^{\mathrm{d}}(k-1,v-1)\right)(x,\dd y) & := \int {\Pi}(k-1,v-1;x,\dd x'){Q}^{\mathrm{r}}(k-1,v-1;x',\dd y).
\end{align*}

Likewise, we can consider unconditional transition probability approximations. These take the form:
\begin{align*}
\apt{p}(s,\dd v; x,\dd y) & = \apt{p}^{\mathrm{a}}(s; x,\dd y)\delta_{s}(\dd v) + \apt{p}^{\mathrm{c}}(s, v; x,\dd y)\dd v,
\end{align*}
where
\begin{align*} 
&\apt{p}^{\mathrm{a}}(s; x,\dd y) = \sum_{\ell\ge 0} \ap{p}^{\mathrm{a}}(\ell; s; x,\dd y),\quad\apt{p}^{\mathrm{c}}(s,v; x,\dd y) = \sum_{\ell, w\,:\, w< \ell} \apt{p}^{\mathrm{c}}(\ell,w;s,v; x,\dd y),\\
&\apt{p}^{\mathrm{a}}(\ell; s;x,\dd y)= \mathrm{Poi}_{\gamma s}(\ell)\; \tld{\Pi}(\ell, \ell;x,\dd y),\\
&\apt{p}^{\mathrm{c}}(\ell,w;s,v; x,\dd y) = \mathrm{Erl}_{\ell-w,\gamma}(s-v) \; \mathrm{Poi}_{\gamma v} (w)\; \tld{\Pi}(\ell, w)(x,\dd y)\;\mathds{1}_{\{\ell>w\ge 0, v<s\}},
\end{align*}
with
\begin{align*}
\tld{\Pi}(0,0)(x,\dd y)&=\delta_x(\dd y)\\
\tld{\Pi}(k,v)(x,\dd y)&={0}\quad\mbox{for all}\quad v>k\\
\tld{\Pi}(k,0)(x,\dd y)& = \sum_{k'=0}^{k-1}  \left(\tld{\Pi}(k-1,k'){Q}^{\mathrm{n}}(k-1,k')\right)(x,\dd y)\quad\mbox{for all}\quad k\ge 1,\\
\tld{\Pi}(k,v)(x,\dd y)& = \left(\tld{\Pi}(k-1,v-1)\tld{Q}^{\mathrm{d}}(k-1,v-1)\right)(x,\dd y)\quad\mbox{for all}\quad k\ge v\ge 1,\\
\tld{Q}^{\mathrm{n}}(\ell,w)(x,\dd y) &= \delta_x(\dd y)-\frac{1}{\gamma}\Lambda^-\left(\tfrac{\ell+1}{\gamma}, \tfrac{w+1}{\gamma})(x,\dd y\right),\\
\tld{Q}^{\mathrm{d}}(\ell,w)(x,\dd y) &=\frac{1}{\gamma}\Lambda^+\left(\tfrac{\ell+1}{\gamma}, \tfrac{w+1}{\gamma})(x,\dd y\right).
\end{align*}

In Theorem \ref{th:convergence_densities_general} below we establish the convergence of the transition probabilities $\ap{p}$ and $\apt{p}$, under the assumption that there exists $K>0$ such that for all $x\in\mathcal{J}$, $s_1,s_2,v_1,v_2\ge 0$, 
\begin{align}\left|\Lambda(s_1,v_1)-\Lambda(s_2,v_2)\right|(x,\mathcal{J})\le K \left(|s_1-s_2| + |v_1-v_2|\right).
\end{align}
Here, the l.h.s. corresponds to the total variation measure of $\Lambda(s_1,v_1)(x,\cdot)-\Lambda(s_2,v_2)(x,\cdot)$ evaluated at $\mathcal{J}$. Its proof is included in Appendix \ref{sec:GeneralProofs}.

\begin{theorem}\label{th:convergence_densities_general}
For any $q>1$, $\varepsilon > 0$, there exists a constant $\alpha'(q,\varepsilon)$ such that for all $s\ge v\ge 0$,
\begin{align*}
&\P\left(\sup_{x\in\mathcal{J}}\left|\ap{p}^{\mathrm{a}}(s) - \apt{p}^{\mathrm{a}}(s)\right|(x,\mathcal{J})\le (s+1)\,\alpha'(q,\varepsilon)\,(\log \gamma)\gamma^{-1/2 + \varepsilon/2}\right)=o(\gamma^{-q}),\\
&\P\left(\sup_{x\in\mathcal{J}}\int_0^v \left|\ap{p}^{\mathrm{c}}(s,v') - \apt{p}^{\mathrm{c}}(s,v')\right| (x,\mathcal{J}) \dd v'\le (s+1)\,\alpha'(q,\varepsilon)\,(\log \gamma)\gamma^{-1/2 + \varepsilon/2}\right)=o(\gamma^{-q}).
\end{align*}
\end{theorem}
}

\section{Illustration and extensions}\label{sec:examples}
{ This section illustrates the numerical feasibility of our methods. First, we conduct the evaluation of cashflows (the main component of the value function) for a semi-Markov parametrization calibrated with real-life data. The focus here is not on outperforming state-of-the-art numerical integro-differential solvers in terms of precision or speed, but rather on demonstrating the ease of implementation of our approach. Finally, we provide a perspective on potential applications of our stochastic approximation framework in credit risk and controlled semi-Markov models.}

\subsection{Disability model}

Consider an actuarial application by analyzing the disability model with a semi-Markov structure, which is one of the most general stochastic structures implemented in the life-insurance market. This model considers three states: active, disabled, and dead, where all transitions are allowed except that, evidently, death is an absorbing state. The model is conceptually depicted in Figure \ref{fig:disability}.

\begin{figure}[!htbp]
	\centering
	\scalebox{0.88}{
	\begin{tikzpicture}[node distance=2em and 0em]
		\node[punktl, draw = none] (2) {};
		\node[punktl, left = 15mm of 2] (0) {active};
		\node[anchor=north east, at=(0.north east)]{$1$};
		\node[punktl, below = 15mm of 2] (3) {dead};
		\node[anchor=north east, at=(3.north east)]{$3$};
		\node[punktl, right =15mm of 2] (1) {disabled};
		\node[anchor=north east, at=(1.north east)]{$2$};
	\path
		(0)	edge [pil, bend left = 5]		node [above]		{$\lambda_{12}(t)$}				(1)
		(1)	edge [pil, bend left = 5]		node [below]		{\color{blue}$\lambda_{21}(t)$}				(0)
		(0)	edge [pil, bend right = 15]		node [below left]		{$\lambda_{13}(t)$}				(3)
		(1)	edge [pil, bend left = 15]		node [below right]		{$\lambda_{23}(t)$}				(3)

	;
	\end{tikzpicture}}
	\caption{The disability model.}
	\label{fig:disability}
\end{figure}

%
%
%
%
%

{

For simplicity, let us assume that $\dd B^{j,v}(s) = b^{j,v}(s) \, \dd s$. In this case, the approximation $\ap{C}_i(\dd s)$ of \eqref{cashflow_approximation} takes the form $\ap{C}_i(\dd s) = \ap{c}_i(s) \, \dd s$, where
\begin{align}\label{cashflow_approximation_density}  
\ap{c}_{i}(s) &= \sum_{j \in \mathcal{J}} \int_{0}^{s} \ap{p}_{ij}(s, \dd v) \, b^{j,v}(s) \nonumber \\
&\quad + \sum_{\substack{j,k \in \mathcal{J} \\ j \neq k}} \int_{0}^{s} \ap{p}_{i;jk}(s, \dd v) \, b^{(j,v),(k,0)}(s).
\end{align}
We refer to $\ap{c}_i$ as the approximated cashflow density.

}
 Now consider {initially the following payment functions}:
{
\begin{align*}
b^{j,v}(s)&= 1_{\{j=2\}} 1_{\{s\in [1,4]\}}\exp(-s/5)/100,\\
b^{(j,v),(k,0)}(s)&= 1_{\{j=1,k>1\}}+1_{\{j=2,k=3\}}.
\end{align*}}
The first one states that there are only stream payments when in disability, and they only commence after having been a minimum time of $1/10$ time units in disability.  { The second states that lump-sum payments (of unit size) are possible only when transitioning from active to disabled or dead or from disabled to dead. Notice that the above payment functions do not depend on $v$, which allows for straightforward implementation of Monte Carlo simulation methods, for effective comparison against our approximation. Later on, the payment functions are modified to depend on $v$ as well.}

Next, consider a semi-Markov structure for the underling jump-process, which is inspired by a calibration on real-life data from an undisclosed Danish life insurer. As stressed in the introduction, the fact that a Markov model is not sufficient to capture line insurance dynamics is well documented, and the internal calibration was done after exploring duration effects on the data with subsampling methods (cf. \cite{bladt2023conditional} and the Vignette in the \texttt{R} package \texttt{AalenJohansen} \cite{aajo} for statistical non-parametric unveiling of duration effects from raw data). After unveiling non-Markovian effects, the final estimation is parametric, and uses a semi-Markov spline parametrization for the transition rates of the general likelihood formula of a jump-process, as provided in \cite{andersen2012statistical}.

The transition rates are given as follows. The transition from active to death, given by $\lambda_{13}(t,u)$, is independent of duration and calibrated to mortality data through spline interpolation as shown in the top left panel of Figure \ref{fig:rates}.
\begin{figure}[!htbp]
\centering
\includegraphics[clip, trim=0cm 0cm 0cm 0cm,width=0.49\textwidth]{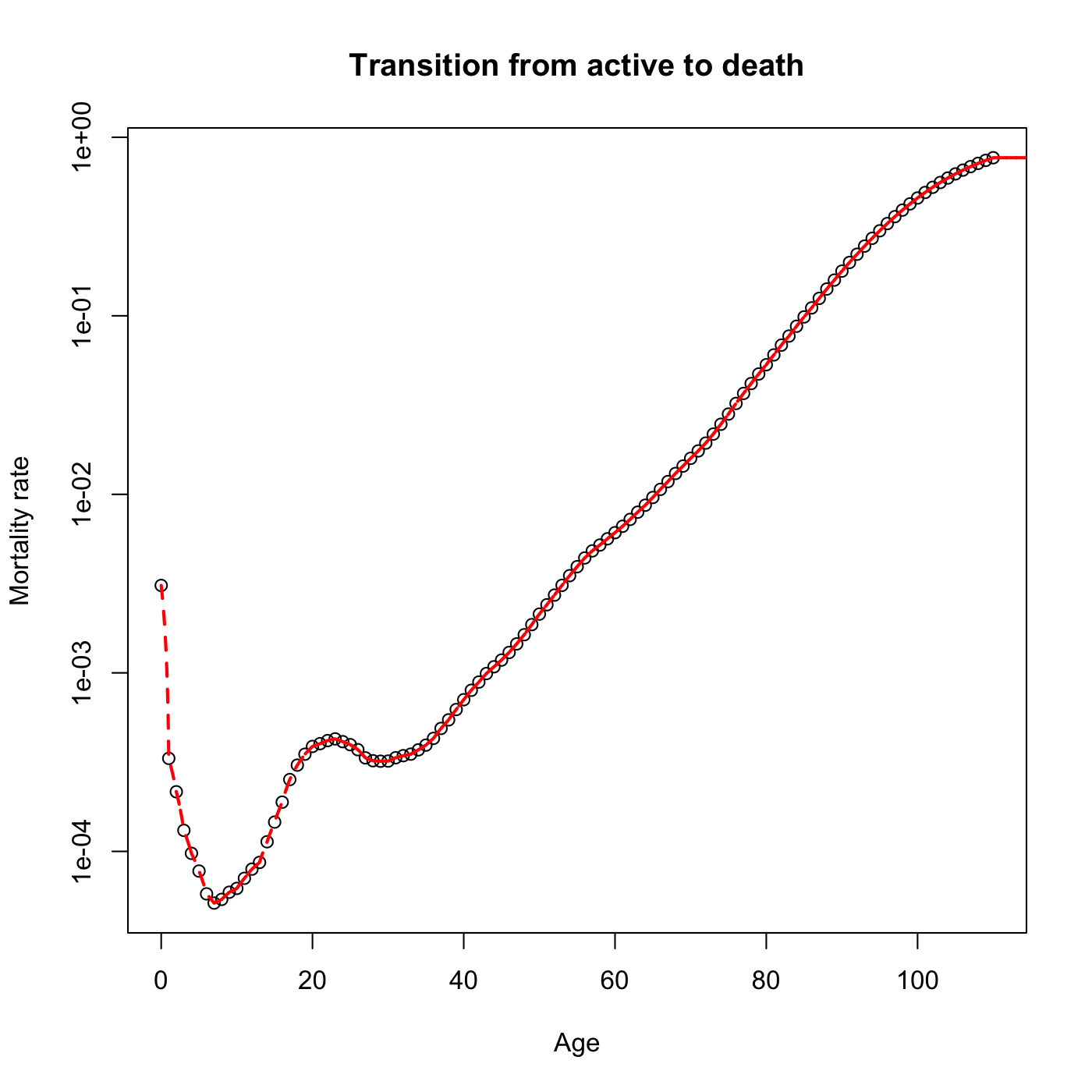}
\includegraphics[clip, trim=0cm 0cm 0cm 0cm,width=0.49\textwidth]{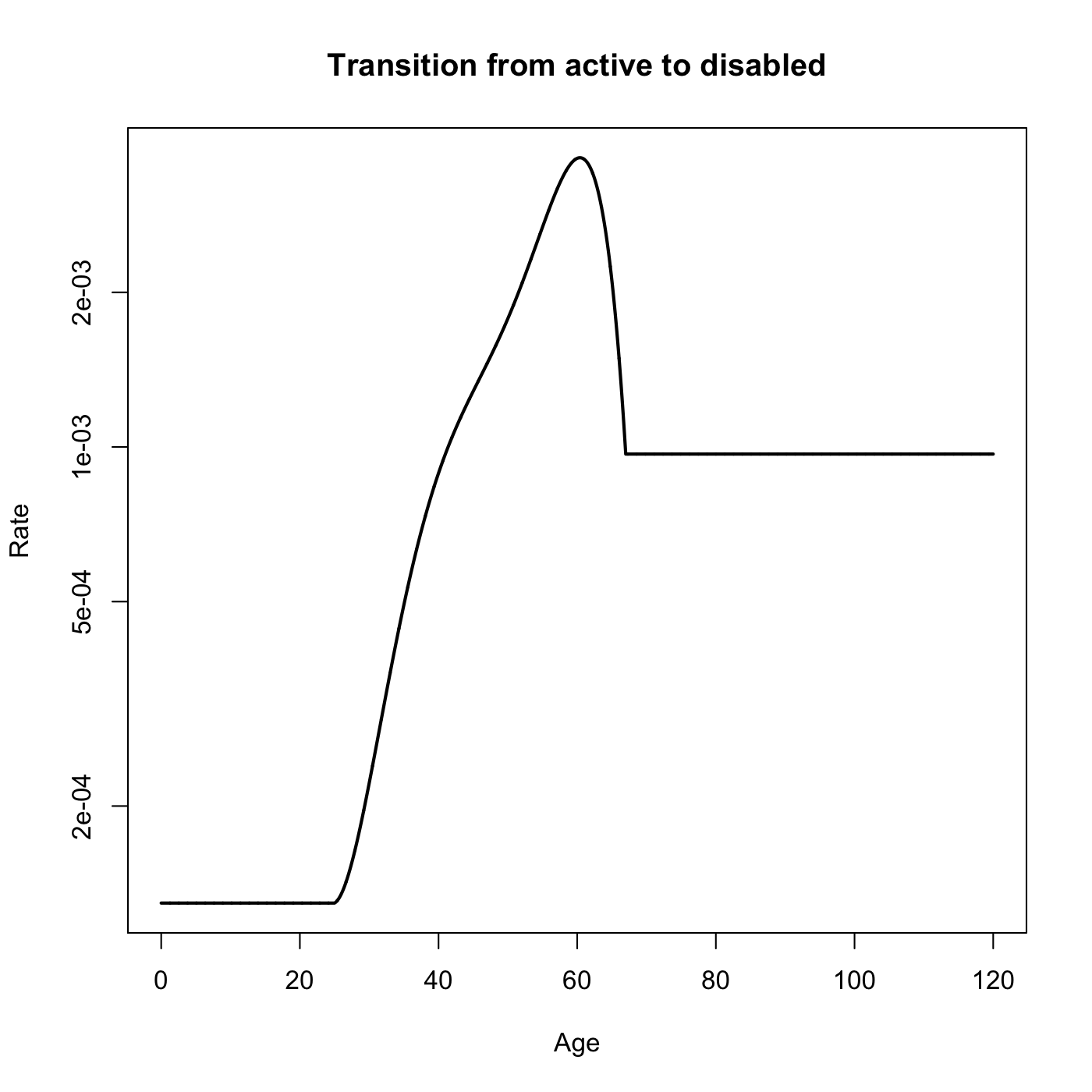}
\includegraphics[clip, trim=0cm 0cm 0cm 0cm,width=0.49\textwidth]{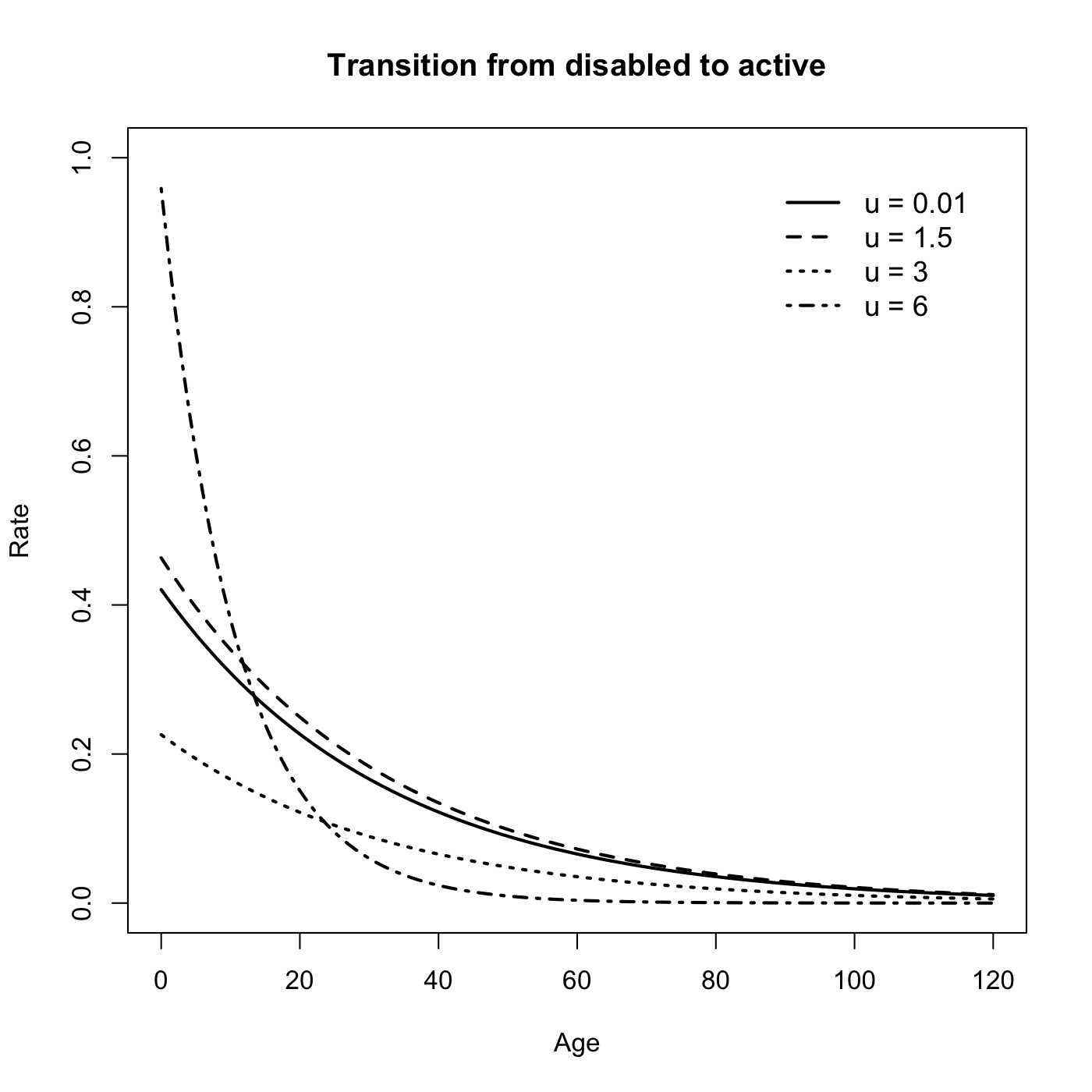}
\includegraphics[clip, trim=0cm 0cm 0cm 0cm,width=0.49\textwidth]{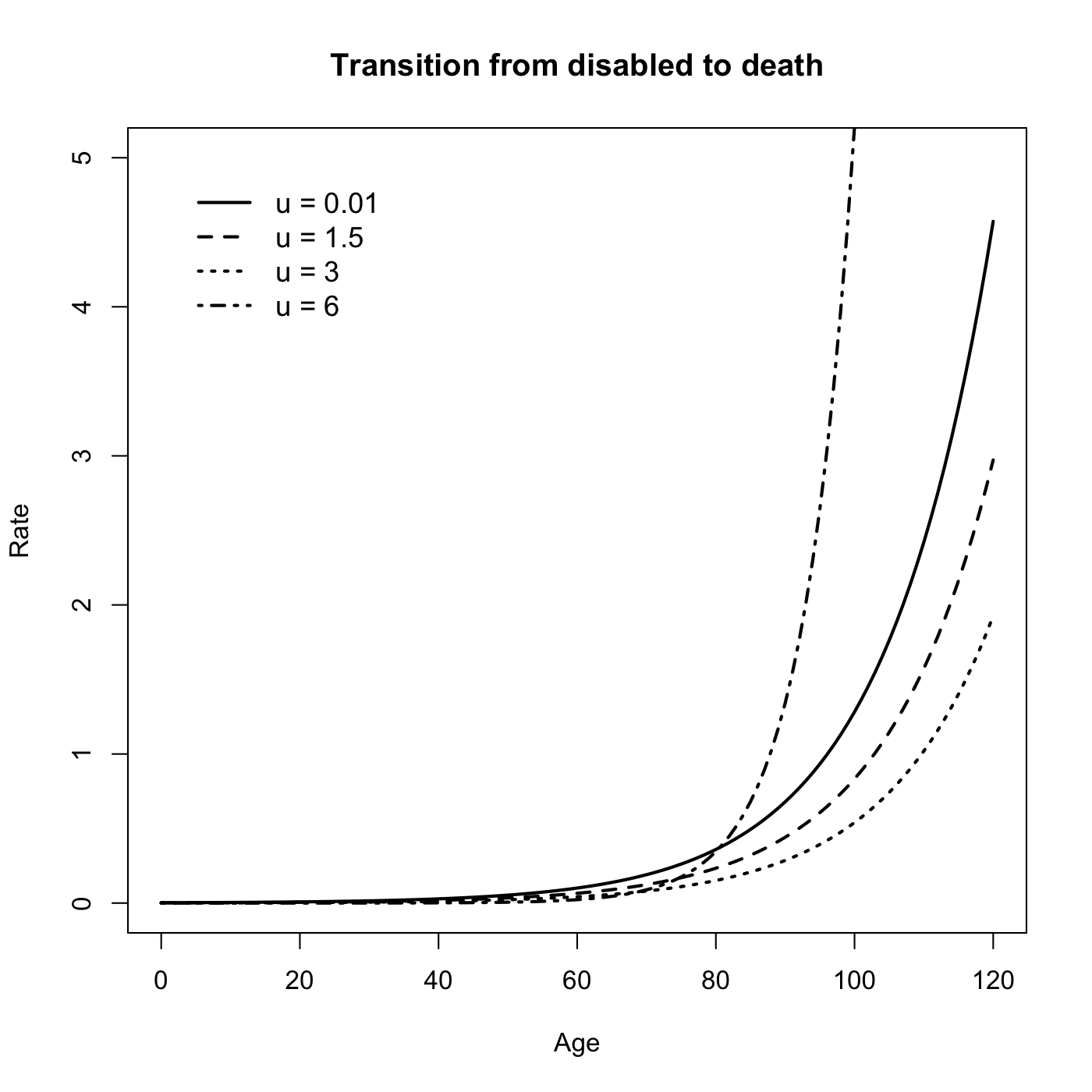}
\caption{Semi-Markov transition rates for life-insurance portfolio. Top left: mortality data and calibrated splines; top right: transition rate from active to disabled; bottom left: transitions from disabled to active; bottom right: transitions from disabled to death.
} \label{fig:rates}
\end{figure}

Transitions to disability are also duration independent, and calibrated internally, which provides the following estimate:
\begin{align*}
\lambda_{12}(t,u)= \begin{cases}
  \lambda_{12,\text{base}}(25) & \text{if } t \leq 25 \\
  \lambda_{12,\text{base}}(t) & \text{if } 25 < t \leq 67 \\
  \lambda_{12,\text{base}}(67) & \text{if } t > 67,
\end{cases}
\end{align*}
where $ \lambda_{12,\text{base}}(t)$ is the exponential of a fifth-order polynomial in $t$. A figure of $\lambda_{12}(t,u)$ as a function of $t$ (since it does not depend on $u$) is provided in the top right panel of Figure \ref{fig:rates}. Notice that before entering the workforce and after retirement age, the disability rate is constant, according to this model. The recovery rate is given by
\begin{align*}
\lambda_{21}(t,u)=
\begin{cases}
  \exp(\phi_3 + \beta_1 t + \theta_3 u) & \text{if } u \leq 0.23 \\
  \exp(\phi_2 + \beta_1 t + \theta_2 u) & \text{if } 0.23 < u \leq 2 \\
  \exp(\phi_1 + \beta_1 t + \theta_1 u) & \text{if } 2 < u \leq 5 \\
  \exp(\phi_0 + \beta_2 t) & \text{if } u > 5,
\end{cases}
\end{align*}
for suitable parameters $\phi_i,\beta_i,\theta_i$. The bottom left panel of Figure \ref{fig:rates} depicts the behavior of the recovery rate as a function of $t$ for various fixed duration values $u$. We see that recovery does not improve with age, and duration also impacts recovery negatively for adults. Realistically, the rate is only precise for ages $25$ to $65$ since that is the range where most data lies in. Extrapolation into very young or very old ages is unreliable but also usually not required for practical purposes.

The only other rate that requires specification is  {the} mortality rate from disability, given by
\begin{align*}
\lambda_{21}(t,u)=
\begin{cases}
  \exp(\alpha_{1} + \eta_{1} t + \zeta_{1} u) & \text{if } u \leq 5 \\
  \exp(\alpha_{2} + \eta_{2} t) & \text{if } u > 5
\end{cases}
\end{align*}
for suitable $\alpha_i,\eta_i,\zeta_i$, and is depicted in the bottom right panel of Figure \ref{fig:rates}.

 {First, we compute the cashflow for the disabled state, for $u=0$, by applying our approximation \eqref{cashflow_approximation}. Here, Theorem \ref{conditional_transitions_approx} provides the required expression for the transition probability approximation, while the integrals were computed numerically. The infinite representation was truncated at $15,50,150$ terms, and the precision was respectively $\gamma=3,10,30$, which upon division guarantees a good approximation up to time $5$, beyond which cashflows are very small. The latter parameters were chosen to match the computation time in a local implementation of (importance-sampled) Monte Carlo simulation of the cashflow for $N_{\mbox{sim}}=10^3,10^4,10^5$ simulated paths, respectively. The result of the approximation and the Monte Carlo simulation is shown in Figure \ref{fig:cashflow}. We notice that our implementation provides a much more stable cashflow approximation even for $\gamma=10$, while $\gamma=30$ can practically be taken as the true cashflow. The Monte Carlo implementation is much slower to converge, exhibiting noise even for $N_{\mbox{sim}}=10^5$ simulations. Both approximations were evaluated using high-level programming languages, and it is arguable that the recipe-like strong approximation is more straightforward and less prone to human error. {Finally, when considering $u> 0$, both approximations can applied on an augmented state-space, and thus similar conclusions to the $u=0$ case apply in those settings.}
\begin{figure}[!htbp]
\centering
\includegraphics[clip, trim=0cm 0cm 0cm 0cm,width=1\textwidth]{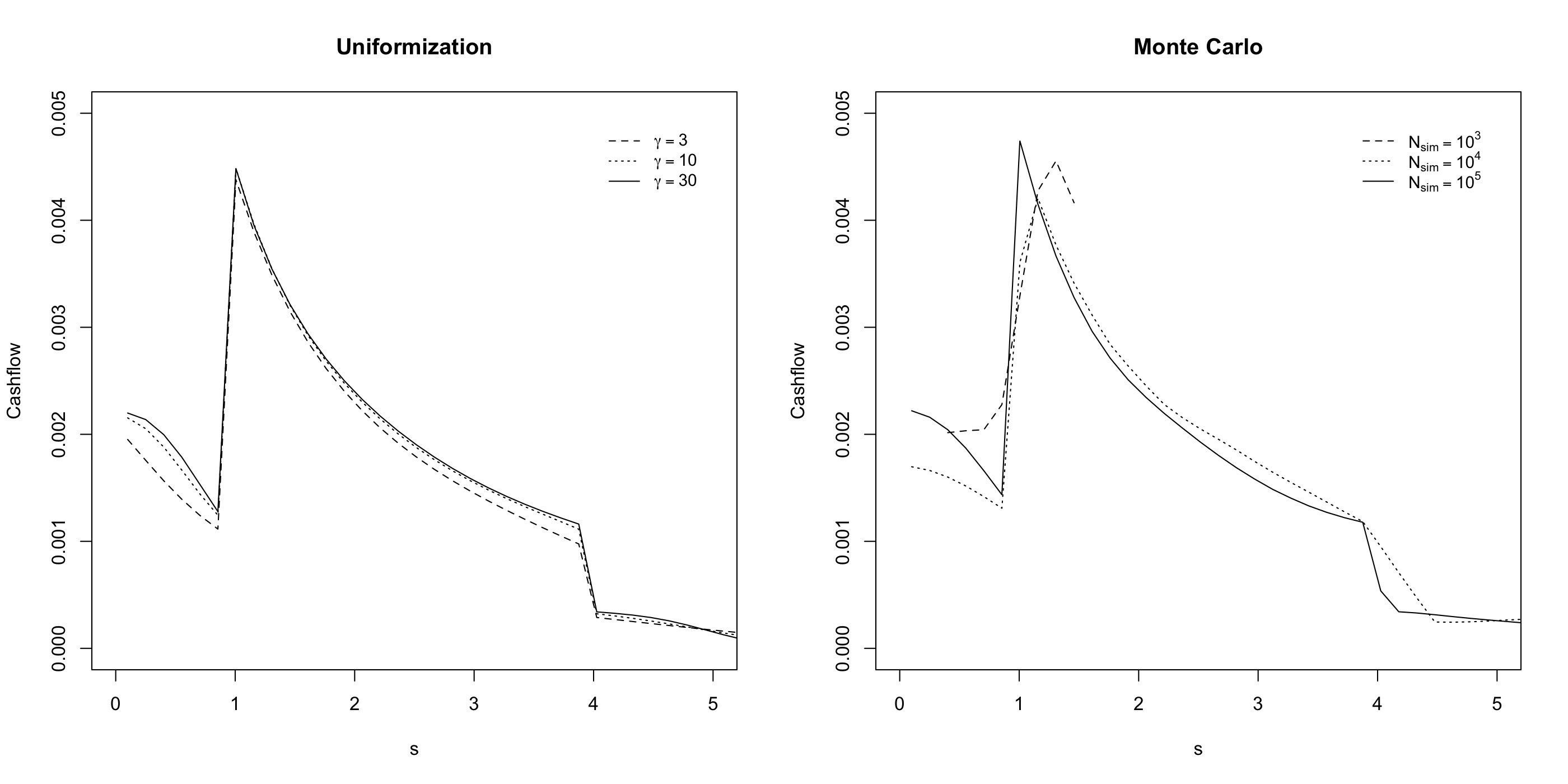}
\caption{{Uniformization (left) and Monte Carlo (right) approximations of the cashflow \eqref{cashflow_definition} for the disabled state, for varying precisions and comparable computation times. The payment functions are not duration-dependent. }
} \label{fig:cashflow}
\end{figure}

Next, we keep the same semi-Markov parameters but consider a payment function that depends on $v$, namely
\begin{align*}
b^{j,v}(s)&= 1_{\{j=2\}} 1_{\{v\ge1/10\}} 1_{\{s\in [1,4]\}}\exp(-(s-v)/5)/100,\\
b^{(j,v),(k,0)}(s)&= 1_{\{j=1,k>1\}}+1_{\{j=2,k=3\}}.
\end{align*}
All other parameters are kept equal, except that we now consider the $u=0$ and $u=1$ cases. Naturally, we expect an increase in cashflow for the latter case. Practically speaking, we use the state-augmentation technique to deal with the $u=1$ case, with intensity matrix now given by \eqref{state_augmentation}. An alternative is using an integro-differential equation solver as benchmark, though it is outside of the scope of this paper. The results are provided in Figure \ref{fig:cashflow2}, where convergence concerning $\gamma$ is again observed.
\begin{figure}[!htbp]
\centering
\includegraphics[clip, trim=0cm 0cm 0cm 0cm,width=1\textwidth]{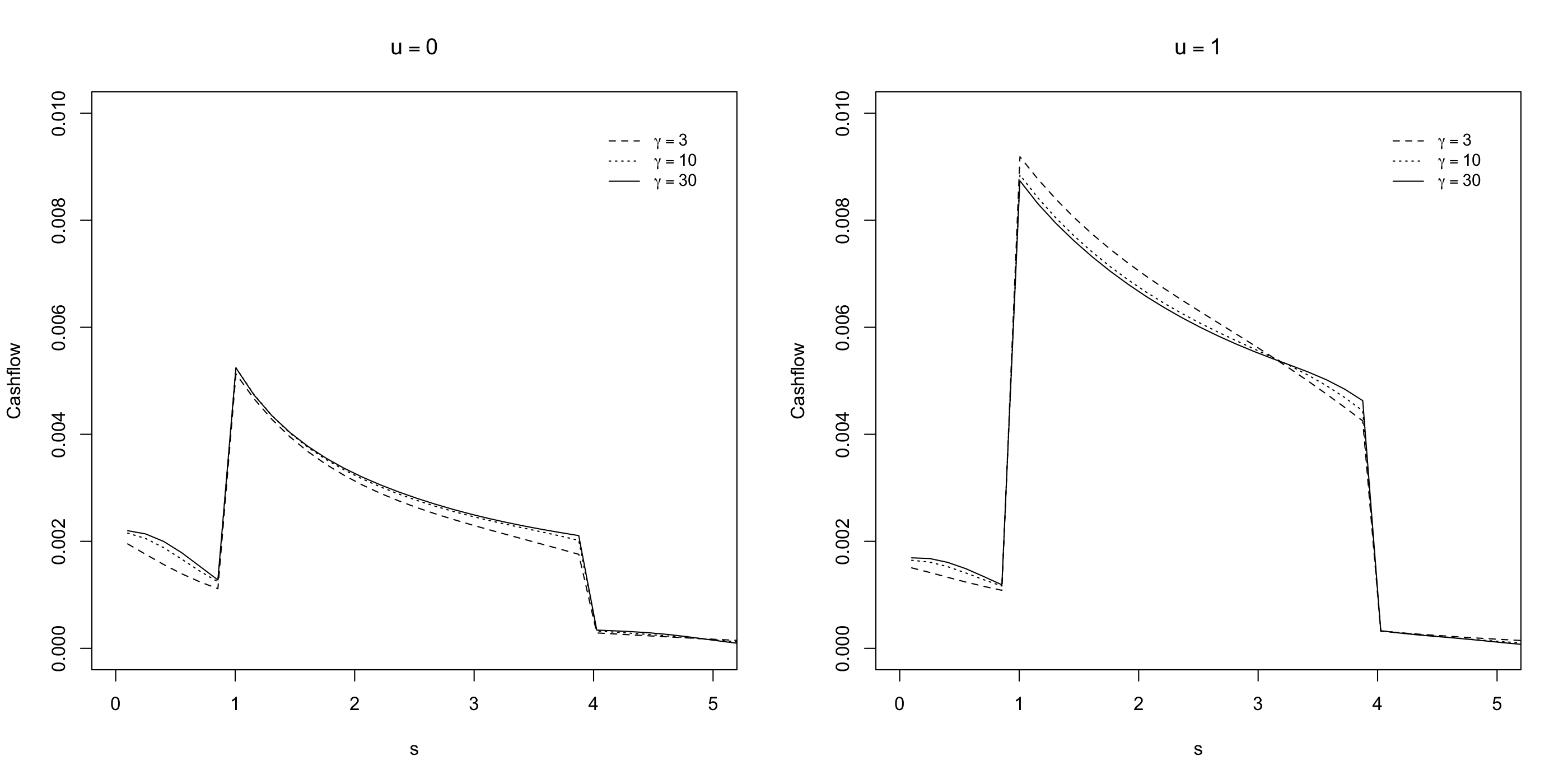}
\caption{{Approximated cashflow density \eqref{cashflow_approximation_density} for the disabled state, for varying precisions. The cases $u=0$ (left) and $u=1$ (right) are considered. The payment functions are duration-dependent. }
} \label{fig:cashflow2}
\end{figure}
 }

\subsection{Reduced form models for credit risk}
\label{sec:credit}
{At least since \cite{jarrow1997markov}, there has been a rich exchange on multi-state modeling between life insurance and credit risk.}
In this section we suggest how our model could be extended for pricing defaultable claims (such as defaultable bonds, or Credit Default Swaps) in reduced form models.

The classical martingale approach to asset pricing  consists of taking an expectation under a risk neutral measure $\mathbb Q$, equivalent to the real-world  probability measure $\mathbb P$, of a 
contingent claim, and discounted by the cumulative interest.
When the claim is defaultable, in  other words, there is a risk that it is not paid in full, then it was shown in \cite{duffie1999modeling, collin2004general} that valuation can proceed similarly, albeit with a discount factor that is default adjusted and under a risk neutral measure that is absolutely continuous with respect to $\mathbb P$ and not necessarily equivalent.

Consider a claim that pays a contingent payoff $X(T)$ at maturity $T$ in the case of no default. The default can be generic, either the default of the company as in the case of a defaultable  bond, or it can be the default of a reference entity, in the case of Credit Default Swaps. Since the seminal paper of \cite{duffie1999modeling}, closed form solutions can be provided under an assumption of  zero recovery as
\begin{equation}
V(t) = \E^{\mathbb Q} \left[ e^{-\int_t^T (r_s + h_s) ds} X(T) |  \mathcal{F}_t \right],
\end{equation}
where $r_t$ represents an interest rate and $h_t$ represents the default rate. This setup can be extended to case of a recovery  that is a proportional to the market value, but for simplicity, we consider here the case of zero recovery rate.
Our framework allows us to extend the classical valuation to the case of semi-Markov state-variable processes. 
As in the insurance setting, the credit risk model is driven by the state-variable process $Z=\{Z(t)\}_{t\ge 0}$ with duration process  $U=\{U(t)\}_{0\le t\le T}$.

As in the classical credit risk setup of  \cite{duffie1999modeling}, we let $X(T) = \Phi(Z(T))$, for some function $\Phi$. The price at time $t$ of the defaultable claim becomes
\[ V(t;i,u)=\E^{\mathbb Q}\left[e^{-\int_t^T (r_s + h_s) ds } \Phi(Z(T)) \mid Z(t)=i,U(t)=u\right],\quad i\in\mathcal{J},\: u\ge0, \]

It is understood that in the semi-Markovian setup, the default  rate $h_t$ depends on the duration as well. We can think it $h_t$ as an absorbing intensity, i.e., the transition rate to a cemetery state $\partial$ that represents the state of default,
\begin{equation}
h_t = \lambda_{Z(t), \partial}(t, U(t)).
\end{equation} 

{
Note that this is a particular case of the general formulation in Subsection 1.4.2  in \cite{duffie1999modeling}. However, for valuation, these authors propose an affine defaultable term-structure models with jumps. See also, e.g., \cite{duffie2004credit} for a comprehensive review of affine models.}
Our semi-Markov setup allows for an alternative valuation setup. The hazard function $\lambda$ is an arbitrary  function that verifies the boundedness condition (for the grid-conditional approximation) and the Lipschitz continuity condition for the unconditional approximation. 
Note that the discount factor depends on the entire trajectory of the intensity process. We conjecture that our baseline setup can be extended to this type of valuation in order to achieve  computational tractability.

\subsection{Control}
\label{sec:control}
We now assume that  that an agent can choose, at time $0$, a control that depends on the time and duration process which affects the process $Z$ via its intensities. 
Under mild assumptions, our framework allows for such control. More specifically, for a set $\mathcal{A}$ of real measurable functions over $\mathds{R}_+^2$, the agent chooses some $a \in \mathcal{A}$ such that $Z$ is driven by $\bm{\Lambda}(s,v, a(s,v))$. For a fixed $a$, $\bm{\Lambda}(\cdot,\cdot,\cdot)$ is effectively only dependent on $s$ and $v$. The construction of such a process remains identical to that of Section \ref{sec:SMJP}. In fact, it results in a construction on the same probability space for all $a \in \mathcal{A}$. The conditional approximation remains exactly the same as well. For the unconditional version,  the following conditions are necessary in order to retain the entry-wise Lipschitzness of intensity matrix

\begin{itemize}
       \item $\Lambda$ is bounded and Lipschitz continuous on its three entries,
       \item The family $\mathcal{A}$ is uniformly bounded and uniformly Lipschitz continuous (with every element sharing the same Lipschitz constant).
\end{itemize}
Moreover, under such conditions, the family $\mathcal{A}$ is compact, ensuring the existence of an optimal control at time zero. This paves the way for efficient computations of approximations for value functions and optimization over control policies.

It would be interesting to extend our setup to that of an intensity control problem, as presented in \cite{bismut1975controle,bremaud1981point}. In this scenario, the goal is to minimize a criterion of the form:

$$\E^{\mathbb Q}\left[ \int_0^T e^{-\int_0^t \rho_s ds } \phi(t, Z(t), U(t), \Lambda(t))dt + e^{-\int_t^T \rho_s ds}\Phi(T, Z_T)\right],$$
for a running cost $\phi$ and a terminal cost $\Phi$.
When  the discount factor is given as a  default adjusted discount of  the previous section  $\rho_t =  \lambda_{Z(t), \partial}(t, U(t))$, this setup no longer falls under our framework and would necessitate an extension to discount factors that depend on the trajectory of the intensity.
We further assume that the running cost depends on the jump intensity from the current state, i.e, 
$\phi(t, Z(t), U(t), \Lambda(t)) = \phi(t, Z(t), U(t), \Lambda_{Z(t), \cdot} (t, U(t)))$ and we assume that $\phi$ is a strictly convex running cost.
The optimal $V^*$ solves the following Hamilton-Jacobi-Bellman Equation

\begin{align*}
0 =& \frac{\partial V^*(t, i, u) }{\partial t} + \frac{\partial V^*(t, i, u) }{\partial u} \\
&+  \inf_{\Lambda} (- \Lambda_{i, \partial} (t, u) V^*(t, i, u)  + \sum_{j}\Lambda_{i,j} (t,u) [V^*(t, j, u) - V^*(t, i, u)]
  + \phi(t, i, u, \Lambda)),\\
&V^*(T, i, u) = \Phi (T, i).
\end{align*}
By discretizing time and using approximations of the value function (which can be computed at a given time $t$ for all $j$ and $u$ at once),  the optimal solution $\Lambda^*(t, u)$ could  be approximated by solving 
$$\inf_{\Lambda} (- \Lambda_{i, \partial} (t, u)) \mathbb V^*(t, i, u)  + \sum_{j}\Lambda_{i,j} (t,u) [\mathbb V^*(t, j, u) - \mathbb V^*(t, i, u)]
  + \phi(t, i, u, \Lambda_{i, \cdot} (t, u))), $$
where we propose to replace the exact optimal solution $V^*$ by its approximation.

Note that this is a constrained optimization problem, since $\sum_j \Lambda_{i,j} = 0$. Given the linear constraint, the problem has a unique solution for a variety of running cost functions, for example quadratic costs.
We leave for future work the extension of our setup to this case, where there is a dependence of discount factors on the trajectory of the intensity process.
\appendix

\section{Proof of Theorem \ref{th:strong_simp}}\label{sec:strongSMJP}
We build on the the proof for strong convergence introduced in \cite{bladt_peralta}   for the simpler case of time-inhomogeneous Markov jump processes. Since the semi-Markov case features duration dependence, the challenge will be to carefully account for the successive epochs at which the duration process returns to zero.  Let us borrow the notation and concepts introduced in Section \ref{sec:approxgen1}, here shown superindexed by the term $(\gamma)$ to make explicit their dependence on $\gamma$. In order to prove convergence in the $J_1$-topology over the space of $\mathcal{J}$-valued c\`adl\`ag functions, denoted by $\mathcal{D}(\mathds{R}_+,\mathcal{J})$, it is sufficient to find a family $\{\Delta^{(\gamma)}\}_{\gamma}$ of random homoeomorphic functions over $\mathds{R}^+$, and an increasing sequence $\{c_k\}_{k\ge 1}$ with $c_k\uparrow\infty$, such that
\begin{align}
\sup_{s\ge 0}|\Delta^{(\gamma)}(s) - s|&\rightarrow 0,\label{eq:strong4}\\
\sum_{k=1}^{\infty}2^{-k}\times\mathds{1} \left\{\mathbb{Z}^{(\gamma)}\big(\Delta^{(\gamma)}(s)\big)\neq\, Z(s)\mbox{ for some }s\le c_k\right\}&\rightarrow 0,\label{eq:strong5}\\
\sum_{k=1}^{\infty}2^{-k}\left(\left(\sup_{s\le c_k}\left|\mathbb{U}^{(\gamma)}\big(\Delta^{(\gamma)}(s)\big)- U(s)\right|\right)\wedge 1\right)&\rightarrow 0,\label{eq:strong6}
\end{align}
in an almost sure sense as $\gamma\rightarrow\infty$. Heuristically speaking, the function $\Delta^{(\gamma)}$ acts as a time change that for increasing compact time intervals, ``couples'' the paths of $\mathbb{Z}^{(\gamma)}$ and $Z$, approximates those of $\mathbb{U}^{(\gamma)}$ to $U$, while $\Delta^{(\gamma)}$ converges uniformly to the identity function. In fact, in order to prove (\ref{eq:strong4})-(\ref{eq:strong6}) for our choice of $\Delta^{(\gamma)}$, next we obtain explicit bounds for their rates of strong convergence.

Fix $\varepsilon>0$ and define
\begin{align}
\Delta^{(\gamma)}(t)=\left\{\begin{array}{ccc}  t\frac{\theta^{(\gamma)}_1}{\chi^{(\gamma)}_1}&\mbox{for}& t\in \left[0,\chi^{(\gamma)}_1\right)\\
\theta^{(\gamma)}_\ell + (t-\chi^{(\gamma)}_\ell)\frac{\theta^{(\gamma)}_{\ell+1}-\theta^{(\gamma)}_{\ell}}{\chi^{(\gamma)}_{\ell+1}-\chi^{(\gamma)}_{\ell}}&\mbox{for}& t\in \left[\chi^{(\gamma)}_\ell, \chi^{(\gamma)}_{\ell+1}\right),\,\ell<\lfloor \gamma^{1+\varepsilon}\rfloor\\
\theta^{(\gamma)}_{\lfloor \gamma^{1+\varepsilon}\rfloor} + (t-\chi^{(\gamma)}_{\lfloor \gamma^{1+\varepsilon}\rfloor})&\mbox{for}& t\in \left[\chi^{(\gamma)}_{\lfloor \gamma^{1+\varepsilon}\rfloor}, \infty\right).
\end{array}\right.\label{eq:defDelta1}
\end{align}
The function $\Delta$ is, in essence, a piecewise linear function that maps the point $\chi^{(\gamma)}_\ell$ to $\theta^{(\gamma)}_\ell$ for all $\ell\le \lfloor \gamma^{1+\varepsilon}\rfloor$, and increases linearly at a rate $1$ after $\chi^{(\gamma)}_{\lfloor \gamma^{1+\varepsilon}\rfloor}$. Particularly, this implies that 
\begin{equation}\label{eq:timechange1}
\mathbb{Z}^{(\gamma)}(\Delta^{(\gamma)}(s)) = Z(s) \mbox{ for all } s\in \left[0, \chi^{(\gamma)}_{\lfloor \gamma^{1+\varepsilon}\rfloor}\right).\end{equation} Furthermore, it can be readily verified that 
\begin{equation}\label{eq:rateDelta6}
\sup_{s\ge 0}|\Delta^{(\gamma)}(s) - s| = \max_{\ell\in\{1,2,\dots, \lfloor \gamma^{1+\varepsilon}\rfloor\} } \left|\chi^{(\gamma)}_\ell - \theta^{(\gamma)}_\ell\right|.
\end{equation}
Employing \cite[Lemma 1]{bladt_peralta}, we arrive {at}
\begin{equation}
\mathds{P}\left(\sup_{s\ge 0}|\Delta^{(\gamma)}(s) - s| \ge \alpha (\log \gamma) \gamma^{-1/2 + \varepsilon/2}\right) = o(\gamma^q),\label{eq:rateDelta5}
\end{equation}
where $q$ is any arbitrary positive real number and $\alpha$ is a constant that only depends on $\varepsilon$ and $q$. Via standard Borell-Cantelli arguments (e.g., choosing $q=2$), it is straightforward to get \eqref{eq:strong4} from \eqref{eq:rateDelta6}. 

For \eqref{eq:strong5}, note that due to \eqref{eq:timechange1},
\begin{align}
&\sum_{k=1}^{\infty}2^{-k}\times\mathds{1} \left\{\mathbb{Z}^{(\gamma)}\big(\Delta^{(\gamma)}(s)\big)\neq\, Z(s)\mbox{ for some }s\le c_k\right\}\nonumber\\
&\quad \le \sum_{k=1}^{\infty}2^{-k}\times\mathds{1} \left\{\chi^{(\gamma)}_{\lfloor \gamma^{1+\varepsilon}\rfloor}\le c_k\right\}\nonumber\\
&\quad = \sum_{k=1}^{\lfloor \gamma^{\varepsilon}\rfloor}2^{-k}\times\mathds{1} \left\{\chi^{(\gamma)}_{\lfloor \gamma^{1+\varepsilon}\rfloor}\le c_k\right\} + \sum_{k=\lfloor \gamma^{\varepsilon}\rfloor+1}^{\infty}2^{-k}\times\mathds{1} \left\{\chi^{(\gamma)}_{\lfloor \gamma^{1+\varepsilon}\rfloor}\le c_k\right\}\nonumber\\
& \quad\le \mathds{1} \left\{\chi^{(\gamma)}_{\lfloor \gamma^{1+\varepsilon}\rfloor} \le c_{\lfloor \gamma^{\varepsilon}\rfloor}\right\} + 2^{-\lfloor \gamma^{\varepsilon}\rfloor},\label{eq:auxbound5}
\end{align}
where in the last inequality we used that $\{c_k\}_k$ is a (yet to be determined) increasing sequence. Since $2^{-\lfloor \gamma^{\varepsilon}\rfloor}\rightarrow 0$, then \eqref{eq:strong5} will follow from \eqref{eq:auxbound5} once we find a sequence $\{c_k\}_k$ such that $\mathds{1} \left\{\chi^{(\gamma)}_{\lfloor \gamma^{1+\varepsilon}\rfloor} \le c_{\lfloor \gamma^{\varepsilon}\rfloor}\right\}\rightarrow 0$ almost surely. We claim that $c_k=k/2$ exhibits the desired property. Indeed, for this choice of $c_k$,
\begin{align}
\left\{\chi^{(\gamma)}_{\lfloor \gamma^{1+\varepsilon}\rfloor} \le c_{\lfloor \gamma^{\varepsilon}\rfloor}\right\} = \left\{\chi^{(\gamma)}_{\lfloor \gamma^{1+\varepsilon}\rfloor} \le \frac{\lfloor \gamma^{\varepsilon}\rfloor}{2}\right\} = \left\{\frac{\gamma}{\lfloor \gamma^{1+\varepsilon}\rfloor}\chi^{(\gamma)}_{\lfloor \gamma^{1+\varepsilon}\rfloor} \le \frac{\gamma}{\lfloor \gamma^{1+\varepsilon}\rfloor}\frac{\lfloor \gamma^{\varepsilon}\rfloor}{2}\right\}.\label{eq:auxsets5}
\end{align}
The term $\frac{\gamma}{\lfloor \gamma^{1+\varepsilon}\rfloor}\chi^{(\gamma)}_{\lfloor \gamma^{1+\varepsilon}\rfloor}$ converges to 1 almost surely. This convergence can be verified by interpreting the term as the sum of $\lfloor \gamma^{1+\varepsilon}\rfloor$ i.i.d. exponential r.v.'s of parameter 1 scaled by the number of summands and then applying the SLLN. Additionally, the term $\frac{\gamma}{\lfloor \gamma^{1+\varepsilon}\rfloor}\frac{\lfloor \gamma^{\varepsilon}\rfloor}{2}$ converges to $\frac{1}{2}$. Given these convergences, \eqref{eq:auxsets5} represents an asymptotically null set, and thus, \eqref{eq:strong5} follows.

Finally, for \eqref{eq:strong6}, employing similar inequalities that lead to \eqref{eq:auxbound5}, we get
\begin{align}
&\sum_{k=1}^{\infty}2^{-k}\left(\left(\sup_{s\le c_k}\left|\mathbb{U}^{(\gamma)}\big(\Delta^{(\gamma)}(s)\big)- U(s)\right|\right)\wedge 1\right)\nonumber\\
& \quad\le \left(\left(\sup_{s\le c_{\lfloor \gamma^{\varepsilon}\rfloor}}\left|\mathbb{U}^{(\gamma)}\big(\Delta^{(\gamma)}(s)\big)- U(s)\right|\right)\wedge 1\right) + 2^{-\lfloor \gamma^{\varepsilon}\rfloor}\nonumber\\
& \quad\le \left(\left(\sup_{s < \chi^{(\gamma)}_{\lfloor \gamma^{1+\varepsilon}\rfloor}}\left|\mathbb{U}^{(\gamma)}\big(\Delta^{(\gamma)}(s)\big)- U(s)\right|\right)\wedge 1\right) + \mathds{1} \left\{\chi^{(\gamma)}_{\lfloor \gamma^{1+\varepsilon}\rfloor} \le c_{\lfloor \gamma^{\varepsilon}\rfloor}\right\} + 2^{-\lfloor \gamma^{\varepsilon}\rfloor}.\label{eq:auxound6}
\end{align}
The almost sure convergence of the last two summands of \eqref{eq:auxound6} to zero were confirmed via \eqref{eq:auxsets5}. What remains is the almost sure convergence of the first summand of \eqref{eq:auxound6} to zero: establishing this will prove \eqref{eq:strong6}. 

Let $\{\sigma^{(\gamma)}_\ell\}_{\ell}$ be the successive epochs at which $U$ returns to $0$. Note that $U$ evolves like the identity function with a space translation to $0$ at the time epochs $\{\sigma^{(\gamma)}_\ell\}_{\ell}$. Similarly, on the time interval $[0, \chi^{(\gamma)}_{\lfloor \gamma^{1+\varepsilon}\rfloor})$, $\mathbb{U}^{(\gamma)}\circ\Delta^{(\gamma)}$ evolves $\Delta^{(\gamma)}$ with a space-translation to $0$ at the time epochs $\{\sigma^{(\gamma)}_\ell\}_{\ell}$. Thus, for $s\in [\sigma^{(\gamma)}_\ell, \sigma^{(\gamma)}_{\ell+1})$, 
\begin{align*}
&\left|\mathbb{U}^{(\gamma)}\big(\Delta^{(\gamma)}(s)\big)- U(s)\right|\\
 &\quad = \left|\mathbb{U}^{(\gamma)}\big(\Delta^{(\gamma)}(s)\big)- U(s) + \left(\Delta^{(\gamma)}(\sigma^{(\gamma)}_\ell) - \sigma^{(\gamma)}_\ell\right) - \left(\Delta^{(\gamma)}(\sigma^{(\gamma)}_\ell) - \sigma^{(\gamma)}_\ell\right)\right|\\
 &\quad \le \left|\left(\mathbb{U}^{(\gamma)}\big(\Delta^{(\gamma)}(s)\big)+ \Delta^{(\gamma)}(\sigma^{(\gamma)}_\ell)\right)- \left(U(s) +  \sigma^{(\gamma)}_\ell\right)\right| + \left|\Delta^{(\gamma)}(\sigma^{(\gamma)}_\ell) - \sigma^{(\gamma)}_\ell\right|\\
 &\quad = \left|\Delta^{(\gamma)}(s) - s\right| + \left|\Delta^{(\gamma)}(\sigma^{(\gamma)}_\ell) - \sigma^{(\gamma)}_\ell\right|.
\end{align*}
From this, it follows
\begin{align}
\sup_{s < \chi^{(\gamma)}_{\lfloor \gamma^{1+\varepsilon}\rfloor}}\left|\mathbb{U}^{(\gamma)}\big(\Delta^{(\gamma)}(s)\big)- U(s)\right|\le 2 \times \sup_{s\ge 0}\left|\Delta^{(\gamma)}(s)-s\right|.\label{eq:auxdurationbound1}
\end{align}
This asserts the almost sure convergence of the l.h.s. of \eqref{eq:auxdurationbound1} to $0$, as supported by \eqref{eq:strong4}. Consequently, \eqref{eq:strong6} is derived, completing the proof.
\section{Proof of Theorem \ref{th:convergence_densities}}\label{sec:convergence_densities}
Before we prove Theorem \ref{th:convergence_densities}, we first present a couple of technical matrix-analytic results, namely, Lemma \ref{lem:boundPi} and Lemma \ref{lem:boundCk} below.

For any real matrices or vectors $\bm{A}^{(1)}=\{A_{ij}^{(1)}\}_{i,j}$ and $\bm{A}^{(2)}=\{A_{ij}^{(2)}\}_{i,j}$ of the same dimensions, we denote by $\bm{A}^{(1)}\le \bm{A}^{(2)}$ the case for which $A_{ij}^{(1)}\le A_{ij}^{(2)}$ for all $i,j$.
Moreover, for a real matrix or vector $\bm{A}=\{A_{ij}\}_{i,j}$, define $|\bm{A}|$ to be the matrix whose entries correspond to the absolute values of those of $\bm{A}$. In particular, if $\bm{e}$ is a column-vector of ones of appropriate size, then
\[(|\bm{A}|\bm{e})_j=\sum_k |A_{jk}|\quad\text{for all } j.\]
Essentially, $(|\bm{A}|\bm{e})_i$ corresponds to { the total variation of the $i$-th row of $\bm{A}$, fact that will aid us in providing a more general proof in Appendix \ref{sec:GeneralProofs}}. Some straightforward properties follow:
\begin{itemize}
       \item $\bm{0}\le |\bm{A}|\bm{e}$ where $\bm{0}$ is a column-vector of zeroes of appropriate size,
       \item $|\bm{A}^{(1)} + \bm{A}^{(2)}|\bm{e}\le |\bm{A}^{(1)}|\bm{e} + |\bm{A}^{(2)}|\bm{e}$ for all matrices $\bm{A}^{(1)}$ and $\bm{A}^{(2)}$ of the same dimensions, with equality if, for example, each entry of $\bm{A}^{(1)}$ and $\bm{A}^{(2)}$ is \emph{not} simultaneously non-zero for both matrices,
       \item $|\bm{A}^{(1)} \bm{A}^{(2)}|\le |\bm{A}^{(1)}||\bm{A}^{(2)}|$ for all square matrices $\bm{A}^{(1)}$ and $\bm{A}^{(2)}$ of the same dimensions,
       \item $|c\bm{A}|\bm{e} = |\bm{A}|(c\bm{e}) = c|\bm{A}|\bm{e}$ for all $c\ge 0$.
\end{itemize}

\begin{lemma}\label{lem:boundPi}
Let $\bm{\Pi}$ be defined through the recursive Equations \ref{eq:rec_aux_1}-\ref{eq:rec_aux_4}, and $\tld{\bm{\Pi}}$  through the recursive Equations \ref{eq:rec_aux_1t}-\ref{eq:rec_aux_4t}. Then, for all $k\ge 0$ and $S\subseteq \{0,1,2,\dots k\}$,
\begin{align}
\sum_{k'\in S} \left|{\bm{\Pi}}(k,k')-\tld{\bm{\Pi}}(k,k')\right|\bm{e}\le k\,C_k\,\bm{e},\label{eq:QtildeQ}
\end{align}
where $C_0=0$ and for $k\ge 1$,
\begin{align}
C_k=\sup\left\{\left(\left|\bm{Q}(\ell,w)-\tld{\bm{Q}}(\ell,w)\right|\bm{e}\right)_j: 0\le w\le \ell\le k-1,\,j\in\mathcal{J}\right\}.\label{eq:defCk}
\end{align}
\end{lemma}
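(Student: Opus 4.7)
\textbf{Proof proposal for Lemma \ref{lem:boundPi}.}

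The plan is to prove the statement by induction on $k$. Since the summands on the left-hand side of \eqref{eq:QtildeQ} are entrywise nonnegative, it suffices to prove the maximal case $S=\{0,1,\dots,k\}$; the general case follows by monotonicity. The base case $k=0$ is immediate from $\bm{\Pi}(0,0)=\tld{\bm{\Pi}}(0,0)=\bm{I}$.

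For the inductive step, assume the claim at level $k-1$. Split the sum into the $k'=0$ term, which is governed by \eqref{eq:rec_aux_3} (involving $\bm{Q}^{\mathrm{n}}$), and the terms with $k'\ge 1$, which are governed by \eqref{eq:rec_aux_4} (involving $\bm{Q}^{\mathrm{d}}$). After reindexing $k'\mapsto k'-1$ in the second block, both blocks range over the same set $k'\in\{0,\dots,k-1\}$, and each summand has the matrix-product form $\bm{\Pi}(k-1,k')\bm{Q}^{\star}(k-1,k')-\tld{\bm{\Pi}}(k-1,k')\tld{\bm{Q}}^{\star}(k-1,k')$ for $\star\in\{\mathrm{n},\mathrm{d}\}$. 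Apply the telescoping identity $AB-CD=(A-C)B+C(B-D)$ together with the triangle and absolute-value-product inequalities listed before the lemma to bound each summand. The point is then to add the $\mathrm{n}$- and $\mathrm{d}$-contributions at the \emph{same} index $k'$ before taking the sum.

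The crucial observation is that $\bm{Q}^{\mathrm{n}}$ and $\bm{Q}^{\mathrm{d}}$ have disjoint support (off-diagonal vs.\ diagonal) and $\bm{Q}^{\mathrm{n}}+\bm{Q}^{\mathrm{d}}=\bm{Q}$, and likewise for the differences $\bm{Q}^{\star}-\tld{\bm{Q}}^{\star}$. Consequently, after adding the two contributions at index $k'$, the factors coming from $\bm{Q}^{\star}(k-1,k')$ collapse to $\bm{Q}(k-1,k')$, whose rows sum to one (since $\bm{Q}=\bm{I}+\gamma^{-1}\bm{\Lambda}$ is row-stochastic for $\gamma\ge\gamma_0$); multiplying by $\bm{e}$ gives $\bm{e}$. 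Similarly, the factors coming from $\bm{Q}^{\star}(k-1,k')-\tld{\bm{Q}}^{\star}(k-1,k')$ collapse to $\bm{Q}(k-1,k')-\tld{\bm{Q}}(k-1,k')$, whose $\ell_1$-row norms are bounded by $C_k$ by the definition \eqref{eq:defCk}. After multiplying by $\bm{e}$, each combined summand is thus bounded entrywise by
\[
\bigl|\bm{\Pi}(k-1,k')-\tld{\bm{\Pi}}(k-1,k')\bigr|\bm{e} \;+\; C_k\,\tld{\bm{\Pi}}(k-1,k')\bm{e}.
\]

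Summing over $k'\in\{0,\dots,k-1\}$, the first piece equals the left-hand side of \eqref{eq:QtildeQ} at level $k-1$ with $S=\{0,\dots,k-1\}$, which by the inductive hypothesis is at most $(k-1)C_{k-1}\bm{e}\le (k-1)C_k\bm{e}$ (monotonicity of $C_k$ in $k$ is immediate from \eqref{eq:defCk}). The second piece equals $C_k\sum_{k'=0}^{k-1}\tld{\bm{\Pi}}(k-1,k')\bm{e}=C_k\bm{e}$, since the row sums $\sum_{k'}\tld{\bm{\Pi}}(k-1,k')\bm{e}$ are total transition probabilities of the Markov chain $\{\tld{X}_\ell\}$ starting from $(0,i)$ at step $k-1$, hence equal to $\bm{e}$. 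Adding the two pieces yields $kC_k\bm{e}$, completing the induction.

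The only subtle point, and thus the main obstacle, is the combining step: if one bounded the $k'=0$ contribution and the $k'\ge 1$ contributions separately, one would pick up the factor $|\bm{Q}^{\star}|\bm{e}\le\bm{e}$ twice and end up with the looser bound $2kC_k\bm{e}$. Resolving this requires grouping terms by the common lower index $k'\in\{0,\dots,k-1\}$ so that the disjoint decomposition $\bm{Q}=\bm{Q}^{\mathrm{d}}+\bm{Q}^{\mathrm{n}}$ can be exploited to recover the exact row-stochasticity $\bm{Q}\bm{e}=\bm{e}$. Everything else is a direct application of the stated properties of $|\cdot|\bm{e}$.
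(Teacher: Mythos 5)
Your proof is correct and follows the same inductive strategy as the paper: split via the recursions \eqref{eq:rec_aux_3}--\eqref{eq:rec_aux_4} into the $\bm{Q}^{\mathrm{n}}$ and $\bm{Q}^{\mathrm{d}}$ contributions, telescope, and exploit the disjoint supports of the diagonal and off-diagonal parts together with $\bm{Q}\bm{e}=\bm{e}$ and $|\bm{Q}-\tld{\bm{Q}}|\bm{e}\le C_k\bm{e}$ to avoid picking up a redundant factor of $2$. Your preliminary reduction to the maximal set $S=\{0,\dots,k\}$ by nonnegativity is a genuine, if modest, streamlining: the paper instead carries a general $S$ through the induction, which forces a case split on whether $0\in S$ and an indicator $\mathds{1}_{\{k''+1\in S\}}$ bookkeeping device when recombining the diagonal and off-diagonal blocks; under your reduction all such indicators are identically one and the casework disappears. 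Both routes hinge on the same core idea, namely grouping the $\mathrm{n}$ and $\mathrm{d}$ pieces at a common lower index before bounding.
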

\begin{proof}
We proceed using induction over $k$. It is trivial to verify \eqref{eq:QtildeQ} for $k=0$. Suppose that \eqref{eq:QtildeQ} holds for $k-1\ge 0$, where the sum in the l.h.s. is taken over any subset of $\{0,1,2,\dots k-1\}$. For the induction step at $k$, fix some $S\subseteq\{0,1,2,\dots k\}$ and consider two cases.

\textbf{Case $\{0\}\notin S$.} Here,
\begin{align}
\sum_{k'\in S}&\left|{\bm{\Pi}}(k,k')-\tld{\bm{\Pi}}(k,k')\right|\bm{e}\nonumber\\
&\qquad\qquad + \sum_{k'\in S}\left| \left[{\bm{\Pi}}(k-1,k'-1)-\tld{\bm{\Pi}}(k-1,k'-1)\right]\tld{\bm{Q}}^{\mathrm{d}}(k-1,k'-1)\right|\bm{e}\nonumber\\
&\le\sum_{k'\in S}\left|  {\bm{\Pi}}(k-1,k'-1)\right| \left|{\bm{Q}}^{\mathrm{d}}(k-1,k'-1)-\tld{\bm{Q}}^{\mathrm{d}}(k-1,k'-1)\right|\bm{e}\nonumber\\
&\qquad\qquad + \sum_{k'\in S}\left| {\bm{\Pi}}(k-1,k'-1)-\tld{\bm{\Pi}}(k-1,k'-1)\right|\left|\tld{\bm{Q}}^{\mathrm{d}}(k-1,k'-1)\right|\bm{e}\nonumber\\
&\le C_{k}\sum_{k'\in S}\left|  {\bm{\Pi}}(k-1,k'-1)\right|\bm{e} + \sum_{k'\in S}\left| {\bm{\Pi}}(k-1,k'-1)-\tld{\bm{\Pi}}(k-1,k'-1)\right|\bm{e}\label{eq:Baux1}\\
&\le C_{k}\,\bm{e} + (k-1)\,C_{k-1}\,\bm{e}\le k\,C_k\,\bm{e},\label{eq:Baux2}
\end{align}
where in \eqref{eq:Baux1} we employed 
\[\begin{gathered}
|{\bm{Q}}^{\mathrm{d}}(k-1,k'-1) - \tld{\bm{Q}}^{\mathrm{d}}(k-1,k'-1)|\bm{e}\le C_{k}\,\bm{e}\quad\mbox{and}\\
 |\tld{\bm{Q}}^{\mathrm{d}}(k-1,k'-1)|\bm{e}\le\tld{\bm{Q}}(k-1,k'-1) \bm{e}\le \bm{e},
 \end{gathered}\]
  and in \eqref{eq:Baux2} we used
\begin{align*}
\sum_{k'\in S}(\left|{\bm{\Pi}}(k-1,k'-1)\right|\bm{e})_i& = \sum_{k'\in S}\sum_{j}\Pi_{ij}(k-1,k'-1) \le 1,
\end{align*}
along with the induction step. Thus, \eqref{eq:QtildeQ} follows.

\textbf{Case $\{0\}\in S$.} In this scenario,
\begin{align*}
&\sum_{k'\in S} \left|{\bm{\Pi}}(k,k')-\tld{\bm{\Pi}}(k,k')\right|\bm{e}\\
&=\sum_{k'\in S\backslash\{0\}}  \left|{\bm{\Pi}}(k-1,k'-1){\bm{Q}}^{\mathrm{d}}(k-1,k'-1) -  \tld{\bm{\Pi}}(k-1,k'-1)\tld{\bm{Q}}^{\mathrm{d}}(k-1,k'-1)\right|\bm{e}\\
&\quad + \left|\sum_{k'=0}^{k-1}  {\bm{\Pi}}(k-1,k'){\bm{Q}}^{\mathrm{n}}(k-1,k')-\sum_{k'=0}^{k-1}  \tld{\bm{\Pi}}(k-1,k')\tld{\bm{Q}}^{\mathrm{n}}(k-1,k')\right|\bm{e}\\
&\le\sum_{k'\in S\backslash\{0\}}  \left|{\bm{\Pi}}(k-1,k'-1){\bm{Q}}^{\mathrm{d}}(k-1,k'-1) -  \tld{\bm{\Pi}}(k-1,k'-1)\tld{\bm{Q}}^{\mathrm{d}}(k-1,k'-1)\right|\bm{e}\\
&\quad + \sum_{k'=0}^{k-1} \left|  {\bm{\Pi}}(k-1,k'){\bm{Q}}^{\mathrm{n}}(k-1,k')-\tld{\bm{\Pi}}(k-1,k')\tld{\bm{Q}}^{\mathrm{n}}(k-1,k')\right|\bm{e}\\
&\le\sum_{k'\in S\setminus\{0\}}\left|  {\bm{\Pi}}(k-1,k'-1)\right|\;\left| {\bm{Q}}^{\mathrm{d}}(k-1,k'-1)-\tld{\bm{Q}}^{\mathrm{d}}(k-1,k'-1)\right|\bm{e}\\
&\quad + \sum_{k'\in S\setminus\{0\}}\left| {\bm{\Pi}}(k-1,k'-1)-\tld{\bm{\Pi}}(k-1,k'-1)\right|\;\left|\tld{\bm{Q}}^{\mathrm{d}}(k-1,k'-1)\right|\bm{e}\\
&\quad+\sum_{k''=0}^{k-1}\left|  {\bm{\Pi}}(k-1,k'')\right|\;\left| {\bm{Q}}^{\mathrm{n}}(k-1,k'')-\tld{\bm{Q}}^{\mathrm{n}}(k-1,k''-1)\right|\bm{e}\\
&\quad + \sum_{k''=0}^{k-1}\left| {\bm{\Pi}}(k-1,k'')-\tld{\bm{\Pi}}(k-1,k'')\right|\;\left|\tld{\bm{Q}}^{\mathrm{n}}(k-1,k'')\right|\bm{e},
\end{align*}
where in the last inequality we employed similar steps akin to those taken for the case $\{0\}\notin S$. Since ${\bm{Q}}^{\mathrm{d}}$ and ${\bm{Q}}^{\mathrm{n}}$ have entries which are \emph{not} simultaneously non-zero (ditto $\tld{\bm{Q}}^{\mathrm{d}}$ and $\tld{\bm{Q}}^{\mathrm{n}}$), then the last inequality is equal to
\begin{align}
&\sum_{k''=0}^{k-1}\left|  {\bm{\Pi}}(k-1,k'')\right|\;\left| \left({\bm{Q}}^{\mathrm{d}}(k-1,k'')-\tld{\bm{Q}}^{\mathrm{d}}(k-1,k'')\right)\mathds{1}_{k''+1\in \mathcal{S}} + \left({\bm{Q}}^{\mathrm{n}}(k-1,k'')-\tld{\bm{Q}}^{\ {n}}(k-1,k'')\right)\right|\bm{e}\nonumber\\
&\quad + \sum_{k''\in S\setminus\{0\}}\left| {\bm{\Pi}}(k-1,k'')-\tld{\bm{\Pi}}(k-1,k'')\right|\;\left|\tld{\bm{Q}}^{\mathrm{d}}(k-1,k'')\mathds{1}_{k''+1\in \mathcal{S}} + \tld{\bm{Q}}^{\mathrm{n}}(k-1,k'')\right|\bm{e}\nonumber\\
& \le C_{k}\sum_{k''=0}^{k-1}\left|{\bm{\Pi}}(k-1,k'')\right|\bm{e} + \sum_{k''=0}^{k-1}\left| {\bm{\Pi}}(k-1,k'')-\tld{\bm{\Pi}}(k-1,k'')\right|\bm{e} \label{eq:Baux10} \\
& \le C_{k}\,\bm{e} + (k-1)\,C_{k-1}\,\bm{e}\le k\,C_k\,\bm{e},\label{eq:Baux20}
\end{align}
where in \eqref{eq:Baux10} and \eqref{eq:Baux20} we employed
\[
\begin{gathered}
 \left| ({\bm{Q}}^{\mathrm{d}}(k-1,k'')-\tld{\bm{Q}}^{\mathrm{d}}(k-1,k''))\mathds{1}_{k''+1\in \mathcal{S}} + ({\bm{Q}}^{\mathrm{n}}(k-1,k'')-\tld{\bm{Q}}^{\mathrm{n}}(k-1,k''))\right|\bm{e}\le C_{k}\,\bm{e},\\ 
\left|\tld{\bm{Q}}^{\mathrm{d}}(k-1,k'')\mathds{1}_{k''+1\in \mathcal{S}} + \tld{\bm{Q}}^{\mathrm{n}}(k-1,k'')\right|\bm{e}\le \tld{\bm{Q}}(k-1,k'')\,\bm{e} \le \bm{e},\\
\sum_{k''=0}^{k-1}(\left|{\bm{\Pi}}(k-1,k'')\right|\bm{e})_i \le 1,
\end{gathered}
\]
along with the induction hypothesis. Hence, \eqref{eq:QtildeQ} holds.
\end{proof}
\begin{lemma}\label{lem:boundCk} Let $C_{k}$ be defined by \eqref{eq:defCk} and suppose that the Lipschitz condition \eqref{eq:LipschitzLambda1} holds. Fix $q>1$ and $\varepsilon\in(0,1)$. Then, there exists $\beta(q,\varepsilon)$ such that
\begin{align}
\P\left(C_{\lfloor \gamma^{1+\varepsilon} \rfloor} \ge \beta(q,\varepsilon)\,(\log \gamma)\,\gamma^{-3/2+\varepsilon/2}\right)=o(\gamma^{-q}).\label{eq:Cgammaepsilon0}
\end{align}
\end{lemma}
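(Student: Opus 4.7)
The plan is to chain together two bounds already established in the paper. First, inequality \eqref{eq:Qbound7} in Section \ref{sec:unconditional} gives an entry-wise estimate
\[
\left|Q_{ij}(\ell,w)-\tld{Q}_{ij}(\ell,w)\right|\le 3K\gamma^{-1}\sup_{\ell'\le \ell+1}\left|\chi_{\ell'}-\tfrac{\ell'}{\gamma}\right|,
\]
valid for all $0\le w\le \ell$ and all $i,j\in\mathcal{J}$, directly from the Lipschitz assumption \eqref{eq:LipschitzLambda1}. Summing this over $j\in\mathcal{J}$ (a $J$-fold sum), the $i$th entry of $|\bm{Q}(\ell,w)-\tld{\bm{Q}}(\ell,w)|\bm{e}$ is bounded by $3JK\gamma^{-1}\sup_{\ell'\le \ell+1}|\chi_{\ell'}-\ell'/\gamma|$.

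Next, I would take the supremum of this estimate over $0\le w\le \ell\le \lfloor\gamma^{1+\varepsilon}\rfloor-1$ and over $i\in\mathcal{J}$ to obtain
\[
C_{\lfloor\gamma^{1+\varepsilon}\rfloor}\le 3JK\gamma^{-1}\sup_{\ell'\le \lfloor\gamma^{1+\varepsilon}\rfloor}\left|\chi_{\ell'}-\tfrac{\ell'}{\gamma}\right|.
\]
At this point the problem is reduced to controlling the uniform deviation of the Poisson arrival grid $\{\chi_{\ell'}\}$ from its mean grid $\{\ell'/\gamma\}$ over an index range of length $\lfloor\gamma^{1+\varepsilon}\rfloor$.

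That deviation is precisely what \cite[Lemma 1]{bladt_peralta}, referenced as \eqref{eq:grid-conv1} in Appendix \ref{sec:convergence_densities}, provides: for any $q>1$ and $\varepsilon\in(0,1)$ there exists a constant $\alpha(q,\varepsilon)$ with
\[
\P\!\left(\sup_{\ell'\le \lfloor\gamma^{1+\varepsilon}\rfloor}\left|\chi_{\ell'}-\tfrac{\ell'}{\gamma}\right|\ge \alpha(q,\varepsilon)\,(\log\gamma)\,\gamma^{-1/2+\varepsilon/2}\right)=o(\gamma^{-q}).
\]
Combining this tail bound with the deterministic estimate for $C_{\lfloor\gamma^{1+\varepsilon}\rfloor}$ from the previous step yields \eqref{eq:Cgammaepsilon0} with $\beta(q,\varepsilon)=3JK\,\alpha(q,\varepsilon)$.

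There is no real obstacle here: the Lipschitz hypothesis converts a deviation of the intensity matrix into a deviation of the grid, and the grid deviation is already handled by the cited lemma. The only point that warrants a bit of care is keeping track of ranges, namely that the supremum defining $C_{\lfloor\gamma^{1+\varepsilon}\rfloor}$ uses indices up to $\ell+1\le\lfloor\gamma^{1+\varepsilon}\rfloor$, which is exactly the regime covered by the grid-convergence lemma and by the choice of cutoff in the construction of $\Delta^{(\gamma)}$ in Appendix \ref{sec:strongSMJP}. Everything else is absorbing constants into $\beta(q,\varepsilon)$.
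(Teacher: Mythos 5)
Your proof is correct and follows essentially the same route as the paper's: bound $C_{\lfloor\gamma^{1+\varepsilon}\rfloor}$ via \eqref{eq:Qbound7} (which encodes the Lipschitz hypothesis) and then invoke the grid-convergence tail bound \eqref{eq:grid-conv1} from \cite[Lemma 1]{bladt_peralta}. The paper even records the same constant: it sets $\beta(q,\varepsilon)=6JKe^{1/2+\varepsilon/2+2q}$, which is exactly $3JK$ times the constant $\alpha(q,\varepsilon)=2e^{1/2+\varepsilon/2+2q}$ appearing in \eqref{eq:grid-conv1}, matching your $\beta=3JK\,\alpha$.
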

\begin{proof}
From \eqref{eq:Qbound7},
{
\begin{align}
C_{\lfloor \gamma^{1+\varepsilon} \rfloor} & = \sup\left\{\left(\left|\bm{Q}(\ell,w)-\tld{\bm{Q}}(\ell,w)\right|\bm{e}\right)_j: 0\le w\le \ell\le \lfloor\gamma^{1+\varepsilon} \rfloor-1,\,j\in\mathcal{J}\right\}\nonumber\\
&\le 3K \gamma^{-1} \sup_{\ell'\le {\lfloor \gamma^{1+\varepsilon} \rfloor}}\left|\chi_{\ell}-\tfrac{\ell}{\gamma}\right|.\label{eq:Cgammaepsilon1}
\end{align}
}
In \cite[Lemma 1, Eq (17)]{bladt_peralta} it is proved that
\begin{align}
\P\left(\sup_{\ell'\le {\lfloor \gamma^{1+\varepsilon} \rfloor}}\left|\chi_{\ell}-\tfrac{\ell}{\gamma}\right| \ge \left(2\,e^{1/2+\varepsilon/2+2q}\right)\,(\log \gamma)\,\gamma^{-1/2+\varepsilon/2}\right)=o(\gamma^{-q}),\label{eq:grid-conv1}
\end{align}
which via \eqref{eq:Cgammaepsilon1}, implies \eqref{eq:Cgammaepsilon0} with { $\beta(q,\varepsilon)= 6Ke^{1/2+\varepsilon/2+2q}$}.
\end{proof}
\begin{proof}[Proof of Theorem \ref{th:convergence_densities}.] { For notational convenience, let us write $\ap{p}^{\mathrm{a}}(\cdot)$ and $\ap{p}^{\mathrm{c}}(\cdot)$ to denote the matrices $\{\ap{p}^{\mathrm{a}}_{ij}(\cdot)\}_{i,j\in\mathcal{J}}$ and $\{\ap{p}^{\mathrm{c}}_{ij}(\cdot)\}_{i,j\in\mathcal{J}}$, respectively.} For $s\ge 0$ and { $i\in\mathcal{J}$}, employing the triangle inequality, \eqref{eq:QtildeQ} and $\sum_{\ell \ge 0}\ell\,\mathrm{Poi}_{\gamma s}(\ell)=\gamma s$,
{
\begin{align*}
\left(\left|\ap{p}^{\mathrm{a}}(s) - \apt{p}^{\mathrm{a}}( s)\right|\bm{e}\right)_i &\le \sum_{\ell\ge 0}\left(\left| \ap{p}^{\mathrm{a}}(\ell; s) - \apt{p}^{\mathrm{a}}(\ell; s)\right|\bm{e}\right)_i\\
&\le \sum_{\ell\ge 0}\mathrm{Poi}_{\gamma s}(\ell)\; \left(\left|\bm{\Pi}(\ell, \ell)-\tld{\bm{\Pi}}(\ell, \ell)\right|\bm{e}\right)_i\\
&\le \sum_{0\le\ell\le\lfloor \gamma^{1+\varepsilon} \rfloor}\mathrm{Poi}_{\gamma s}(\ell)\;\left( \left|\bm{\Pi}(\ell, \ell)-\tld{\bm{\Pi}}(\ell, \ell)\right|\bm{e} \right)_i\\
&\qquad + \sum_{\ell\ge \lfloor \gamma^{1+\varepsilon} \rfloor +1 }\mathrm{Poi}_{\gamma s}(\ell)\left( \bm{\Pi}(\ell, \ell)\bm{e} + \tld{\bm{\Pi}}(\ell, \ell)\bm{e}\right)_i\\
&\le \sum_{0\le\ell\le\lfloor \gamma^{1+\varepsilon} \rfloor}\mathrm{Poi}_{\gamma s}(\ell)\,\ell\,C_\ell + 2\sum_{\ell\ge \lfloor \gamma^{1+\varepsilon} \rfloor +1 }\mathrm{Poi}_{\gamma s}(\ell)\\
&\le C_{\lfloor \gamma^{1+\varepsilon} \rfloor}\sum_{0\le\ell\le\lfloor \gamma^{1+\varepsilon} \rfloor}\ell\,\mathrm{Poi}_{\gamma s}(\ell) + 2\sum_{\ell\ge \lfloor \gamma^{1+\varepsilon} \rfloor +1 }\mathrm{Poi}_{\gamma s}(\ell)\\
&\le C_{\lfloor \gamma^{1+\varepsilon} \rfloor}\sum_{\ell \ge 0}\ell\,\mathrm{Poi}_{\gamma s}(\ell) + 2\sum_{\ell\ge \lfloor \gamma^{1+\varepsilon} \rfloor +1 }\mathrm{Poi}_{\gamma s}(\ell)\\
&\le C_{\lfloor \gamma^{1+\varepsilon} \rfloor}(\gamma s)+ 2\sum_{\ell\ge \lfloor \gamma^{1+\varepsilon} \rfloor +1 }\mathrm{Poi}_{\gamma s}(\ell).
\end{align*}
}
{
Similarly, for $0\le v\le s$ and $i\in\mathcal{J}$,
\begin{align*}
&\int_0^v \left(\left| \ap{p}^{\mathrm{c}}(s,v') - \apt{p}^{\mathrm{c}}(s,v')\right| \bm{e}\right)_i\;\dd v' \\
&\quad \le \int_0^s \sum_{\ell\ge 1}\sum_{w=0}^{\ell-1}\mathrm{Erl}_{\ell-w,\gamma}(s-v') \; \mathrm{Poi}_{\gamma v'} (w)\; \left(\left|\bm{\Pi}(\ell, w)-\tld{\bm{\Pi}}(\ell, w)\right|\bm{e}\right)_i\dd v'\\
&\quad =\sum_{\ell\ge 1}\sum_{w=0}^{\ell-1}\left(\int_0^s \mathrm{Erl}_{\ell-w,\gamma}(s-v') \; \mathrm{Poi}_{\gamma v'} (w) \dd v'\right)\; \left(\left|\bm{\Pi}(\ell, w)-\tld{\bm{\Pi}}(\ell, w)\right|\bm{e}\right)_i\\
&\quad =\sum_{\ell\ge 1} \mathrm{Poi}_{\gamma s} (\ell) \sum_{w=0}^{\ell-1} \left(\left|\bm{\Pi}(\ell, w)-\tld{\bm{\Pi}}(\ell, w)\right|\bm{e}\right)_i\\
&\quad \le \sum_{1\le \ell \le \lfloor \gamma^{1+\varepsilon} \rfloor} \mathrm{Poi}_{\gamma s} (\ell) \sum_{w=0}^{\ell-1} \left(\left|\bm{\Pi}(\ell, w)-\tld{\bm{\Pi}}(\ell, w)\right|\bm{e}\right)_i\\
&\qquad\quad + \sum_{\ell\ge \lfloor \gamma^{1+\varepsilon} \rfloor +1 }\mathrm{Poi}_{\gamma s} (\ell)\left(\sum_{w=0}^{\ell-1} (\bm{\Pi}(\ell, w)\bm{e})_i+ \sum_{w=0}^{\ell-1} (\tld{\bm{\Pi}}(\ell, w)\bm{e})_i\right)\\
&\quad \le\sum_{1\le \ell \le \lfloor \gamma^{1+\varepsilon} \rfloor} \mathrm{Poi}_{\gamma s} (\ell) \,\ell\, C_\ell + 2 \sum_{\ell\ge \lfloor \gamma^{1+\varepsilon} \rfloor +1 }\mathrm{Poi}_{\gamma s} (\ell)\\
&\quad = C_{\lfloor \gamma^{1+\varepsilon} \rfloor}(\gamma s) + 2 \sum_{\ell\ge \lfloor \gamma^{1+\varepsilon} \rfloor +1 }\mathrm{Poi}_{\gamma s} (\ell);
\end{align*}
}
note that we employed the identity $\int_0^s \mathrm{Erl}_{\ell-w,\gamma}(s-v') \; \mathrm{Poi}_{\gamma v'} (w) \dd v'=\mathrm{Poi}_{\gamma s}(\ell)$. In short, both { $\sum_{j\in\mathcal{J}}\left|\ap{p}_{ij}^{\mathrm{a}}(s) - \apt{p}_{ii}^{\mathrm{a}}( s)\right|$ and $\int_0^v \sum_{j\in\mathcal{J}}\left| \ap{p}_{ij}^{\mathrm{c}}(s,v') - \apt{p}_{ij}^{\mathrm{c}}(s,v')\right| \dd v'$} are bounded by 
\[C_{\lfloor \gamma^{1+\varepsilon} \rfloor}(\gamma s) + 2 \sum_{\ell\ge \lfloor \gamma^{1+\varepsilon} \rfloor +1 }\mathrm{Poi}_{\gamma s} (\ell)= C_{\lfloor \gamma^{1+\varepsilon} \rfloor}(\gamma s) + 2 \P(\chi_{\lfloor \gamma^{1+\varepsilon} \rfloor +1}<s).\]
The proof can then be concluded by noting that
\begin{align*}
\P&\left( C_{\lfloor \gamma^{1+\varepsilon} \rfloor}(\gamma s) + 2 \P(\chi_{\lfloor \gamma^{1+\varepsilon} \rfloor +1} { \le s)}\ge(s+1)\,\alpha(q,\varepsilon)\,(\log \gamma)\gamma^{-1/2 + \varepsilon/2}) \right)\\
&\le \P\left( C_{\lfloor \gamma^{1+\varepsilon} \rfloor}(\gamma s) \ge s\,\alpha(q,\varepsilon)\,(\log \gamma)\gamma^{-1/2 + \varepsilon/2}) \right)\\
&\quad + \mathds{1}\{2 \P(\chi_{\lfloor \gamma^{1+\varepsilon} \rfloor +1}\le s) \ge \alpha(q,\varepsilon)\,(\log \gamma)\gamma^{-1/2 + \varepsilon/2}\}\\
& = \P\left( C_{\lfloor \gamma^{1+\varepsilon} \rfloor} \ge \alpha(q,\varepsilon)\,(\log \gamma)\gamma^{-3/2 + \varepsilon/2}) \right)\\
&\quad + \mathds{1}\{2 \P(\chi_{\lfloor \gamma^{1+\varepsilon} \rfloor +1}\le s) \ge \alpha(q,\varepsilon)\,(\log \gamma)\gamma^{-1/2 + \varepsilon/2}\} = o(\gamma^{-q}),
\end{align*}
where in the last step we employed \eqref{eq:Cgammaepsilon0} and the fact that $\P(\chi_{\lfloor \gamma^{1+\varepsilon} \rfloor +1}\le s)$ decays at a $q'$-polynomial rate for any $q'>0$ (cf. \eqref{eq:grid-conv1}).
\end{proof}

{
     \section{Proof of Theorem \ref{th:convergence_densities_general}}\label{sec:GeneralProofs}  

As it turns out, the proof of Theorem \ref{th:convergence_densities} in Appendix \ref{sec:convergence_densities} can be translated verbatim to the general state-space case by simply replacing the use of total variation of a matrix with the total variation of a signed kernel. Here we present a sketch of how to update the necessary concepts and their properties.

First, let us note that for all signed kernels $A$, $A^{(1)}$ and $A^{(2)}$ over the measurable space $(\mathcal{J},\mathscr{J})$, the following properties hold:
\begin{itemize}
       \item $|A|(x,\cdot)$ is a nonnegative measure for all $x\in \mathcal{J}$,
       \item $|A^{(1)}+A^{(2)}|(x,B)\le |A^{(1)}|(x,B) + |A^{(2)}|(x,B)$ for all $x\in\mathcal{J}, B\in\mathscr{J}$, with equality holding if, for example, the supports of $A^{(1)}(x,\cdot)$ and $A^{(2)}(x,\cdot)$ are disjoint,
       \item $|A^{(1)}A^{(2)}|(x,B)\le |A^{(1)}||A^{(2)}|(x,B)$ for all $x\in\mathcal{J}, B\in\mathscr{J}$,
       \item $|c A|(x,\dd y) = c|A|(x,\dd y)$ for all $x\in\mathcal{J}, c\ge 0$.
\end{itemize}
With these properties at hand, we can prove the following technical lemma.

\begin{lemma}\label{lem:boundPiGeneral}
For all $k\ge 0$, $x\in\mathcal{J}$ and $S\subseteq \{0,1,2,\dots k\}$,
\begin{align}
\sum_{k'\in S}  \left|{{\Pi}} (k,k')-\tld{{\Pi}} (k,k')\right| (x, \mathcal{J})\le k\,C_k,\label{eq:QtildeQkernel}
\end{align}
where $C_0=0$ and for $k\ge 1$,
\begin{align}
C_k=\sup\left\{\left|{Q}(\ell,w)-\tld{{Q}}(\ell,w)\right|(x,\mathcal{J}): 0\le w\le \ell\le k-1,\,x\in \mathcal{J}\right\}
\end{align}
\end{lemma}
\begin{proof}
As in the proof of Lemma \ref{lem:boundPi}, we employ induction over $k$. Let \eqref{eq:QtildeQkernel} hold for $k-1\ge 0$, and for $S\subseteq\{0,1,2,\dots k\}$, consider two cases.

\textbf{Case $\{0\}\notin S$.} Following similar steps as in Case $\{0\}\notin S$ of Lemma \ref{lem:boundPi},
\begin{align}
&\sum_{k'\in S}\left|{{\Pi}} (k,k')-\tld{{\Pi}} (k,k')\right| (x, \mathcal{J})\nonumber\\
&\le\sum_{k'\in S} \left|  {{\Pi}}(k-1,k'-1)\right| \left|{{Q}}^{\mathrm{d}}(k-1,k'-1)-\tld{{Q}}^{\mathrm{d}}(k-1,k'-1)\right|(x,\mathcal{J})\nonumber\\
&\qquad\qquad + \sum_{k'\in S} \left| {{\Pi}}(k-1,k'-1)-\tld{{\Pi}}(k-1,k'-1)\right|\left|\tld{{Q}}^{\mathrm{d}}(k-1,k'-1)\right|(x,\mathcal{J})\nonumber\\
&\le C_{k}\sum_{k'\in S}  \left|  {{\Pi}}(k-1,k'-1)\right|(x,\mathcal{J}) + \sum_{k'\in S} \left| {{\Pi}}(k-1,k'-1)-\tld{{\Pi}}(k-1,k'-1)\right|\nonumber\\
&\le C_{k} + (k-1)\,C_{k-1}\le k\,C_k,\nonumber
\end{align}
where we employed the induction hypothesis in the second inequality.

\textbf{Case $\{0\}\in S$.} Just as the case $\{0\}\notin S$ is a translation of the one in Lemma \ref{lem:boundPi} (with updated technical language), the current case is also a translation to the one of Lemma \ref{lem:boundPi}. We note that in this case, we employ that the kernels $Q^{\mathrm{d}}$ and  $Q^{\mathrm{n}}$ (ditto $\tld{Q}^{\mathrm{d}}$ and  $\tld{Q}^{\mathrm{n}}$) when evaluated at a fixed point are measures with disjoint support, paralleling the notion of pair of matrices with entries that are not simultaneously non-zero in Lemma \ref{lem:boundPi}. All the other steps are straightforward to generalize.
\end{proof}

We note that, as mentioned earlier, the proof of Lemma \ref{lem:boundPiGeneral} is essentially the same of Lemma \ref{lem:boundPi}, modulo the replacement of total variation of matrices with total variation of kernels. Likewise, employing \cite[Lemma 1, Eq (17)]{bladt_peralta} as in the proof of Lemma \ref{lem:boundCk}, we get in a straightforward manner that for all $q>1$ and $\epsilon>0$ there exists some $\beta'(q,\varepsilon)>0$ such that
\begin{align*}
\P\left(C_{\lfloor \gamma^{1+\varepsilon} \rfloor} \ge \beta'(q,\varepsilon)\,(\log \gamma)\,\gamma^{-3/2+\varepsilon/2}\right)=o(\gamma^{-q}).
\end{align*}
In turn, with steps akin to those in the proof of Theorem \ref{th:convergence_densities}  in Appendix \ref{sec:convergence_densities}, we prove that Theorem \ref{th:convergence_densities_general} holds.
}

\textbf{Acknowledgement.} MB and OP would like to acknowledge financial support from the Swiss National Science Foundation Project 200021\_191984. AM and OP were partially supported by an AXA Research grant on Mitigating risk in the wake of the Covid-19 Pandemic.

\textbf{Declaration} We declare no conflict of interest related to the current manuscript.

\bibliographystyle{apalike}
\bibliography{SemiMarkov_FS_Revised.bib}

\end{document}